\documentclass{amsart}
\usepackage{amsmath,hyperref,amsfonts,amssymb,amsthm,mathrsfs,mathtools,comment}
\usepackage{pdfpages} 
\usepackage{inputenc}
\usepackage{graphicx}
\usepackage[T1]{fontenc}
\usepackage{lmodern}

\theoremstyle{plain}
\newtheorem{theorem}{Theorem}[section]
\newtheorem{prop}[theorem]{Proposition}
\newtheorem{proposition}[theorem]{Proposition}
\newtheorem{lemma}[theorem]{Lemma}
\newtheorem{corollary}[theorem]{Corollary}
\newtheorem{conjecture}[theorem]{Conjecture}
\newtheorem{definition}[theorem]{Definition}

\theoremstyle{remark}
\newtheorem{remark}[theorem]{Remark}
\newtheorem*{remark*}{Remark}

\theoremstyle{remark}
\newtheorem{notation}[theorem]{Notation}
\newtheorem*{notation*}{Notation}

\newcommand{\fp}{\mathfrak{p}}

\newcommand{\cS}{\mathcal{S}}
\newcommand{\cK}{\mathcal{K}}

\newcommand{\cA}{\mathcal{A}}
\newcommand{\cO}{\mathcal{O}}
\newcommand{\cH}{\mathcal{H}}
\newcommand{\cT}{\mathcal{T}}
\newcommand{\cC}{\mathcal{C}}
\newcommand{\cM}{\mathcal{M}}

\DeclareMathOperator{\sgn}{sgn}

\newcommand{\Spec}{\mathrm{Spec}}
\newcommand{\Sp}{\mathrm{Sp}}
\newcommand{\disc}{\mathrm{disc}}
\newcommand{\GL}{\mathrm{GL}}

\renewcommand{\bar}{\overline}

\newcommand{\pfloor}[1]{\left\lfloor #1 \right\rfloor}

\newcommand{\1}{\mathbf{1}}
\newcommand{\A}{\mathbb{A}}

\newcommand{\F}{\mathbb{F}}

\newcommand{\Q}{\mathbb{Q}}
\newcommand{\Z}{\mathbb{Z}}
\newcommand{\R}{\mathbb{R}}

\newcommand{\Rsd}{R_s^{\sharp}}

\begin{document}

\title[Generically ordinary hyperelliptic families]{Construction of generically ordinary families of hyperelliptic curves}

\author{Hui June Zhu}

\address{
Department of Mathematics,
University at Buffalo, 
State University of New York,
Buffalo, NY 14260, USA}
\email{hjzhu@math.buffalo.edu}

\date{Last updated August 23rd, 2026}

\begin{abstract}
Katz conjectured in a 2018 lecture 
that the family of curves $y^2=x^d-dx+t$ 
over the $t$-line 
is generically ordinary for all sufficiently large primes $p$. 
We prove that, for every $g\ge 2$ and every nonzero algebraic 
integer $\alpha$, 
the  genus-$g$ families 
$\cC_\alpha: y^2=x^d+\alpha x+t$ where $d\in\{2g+1, 2g+2\}$ 
are generically ordinary at every prime 
$p>P^+(d)$, provided that $\alpha$ is nonzero modulo every prime above $p$.
The bound $P^+(d)=d^2-4d+2$ if $d$ is odd, 
and $P^+(d)=(d^2-3d+2)/2$ if $d$ is even. 
Finally, we show that for every algebraic integer $\alpha$ and 
every prime $p\equiv 1\pmod d$, the family $\cC_\alpha$ is generically ordinary at every prime $\fp$ above $p$. 
\end{abstract}

\maketitle

\section{Introduction and statement of results}

Let $\cC$ be a smooth projective curve of genus $g$ over a field of
characteristic $p>0$.  
The curve 
$\cC$ is {\it ordinary} if its Jacobian is an ordinary 
abelian variety, that is, its  Newton polygon has
only slopes $0$ and $1$.
For an elliptic curve over a finite fields,
the classical {\em Hasse invariant} determines its ordinarity. 
In higher genus, the analogous object is the Hasse--Witt matrix, 
or its dual the
Cartier--Manin matrix, 
and ordinarity is detected by the nonvanishing of
its determinant. 

The ordinary locus in the moduli space of curves $\cM_g$ is
open, dense, geometrically irreducible, and the corresponding homomorphism 
$\pi_1\rightarrow 
\GL_g(\F_p)$ is surjective, see \cite{FG04,FC90}. 
Analogous statements hold for the ordinary locus in the hyperelliptic locus $\cH_g$ 
(see \cite{GP05,AP08}, and also \cite{PZ12} for the characteristic-$2$ case).

Given a one-parameter family of hyperelliptic curves over a number field,
with ``large monodromy''--- that is, whose topological monodromy is of finite index in $\Sp_{2g}(\Z)$---  does it always meet the ordinary locus in $\cH_g$ for every prime $p$? 
Katz's family $$\cK: y^2=x^d-dx+t,$$ 
over (the smooth locus of) the $t$-line has large monodromy 
(as shown in \cite{Kat14}). However, 
$\cK$ may fail to be generically ordinary for small $p$ 
(see \cite{Kat18}).
   
\begin{conjecture}[Katz 2018]
\label{Con:Katz}
Let $d\in\{2g+1,2g+2\},$ and $g\ge 2$.
There exists a quadratic polynomial $P^+(d)$ 
such that the family $\cK$ is generically ordinary 
at $p$ for all $p>P^+(d)$. 
\end{conjecture}

\begin{remark*}[The elliptic curve case]
In the case $g=1$ (that is $d=3,4$) the short Weierstrass family 
$y^2=x^d+\alpha x+t$ over the $t$-line 
is generically ordinary at every prime $p>3$ and $p\nmid \alpha$.
The case $g=2$ can be proved by Will Sawin's argument 
 \cite{Saw16}. 
This paper settles this conjecture for all $g\ge 2$.
\end{remark*}

We use the following standing notation throughout the paper:

\begin{notation}\label{N:main}
Let $d\in\{2g+1,2g+2\}$ and $g\ge 2$. 
Let $\alpha$ be an algebraic integer 
and let 
\begin{equation*}
        \cC_\alpha(t):\quad y^2=x^d+\alpha x+t.
\end{equation*}
Let $D_\alpha(t)$ denote 
the discriminant $\disc_x(x^d+\alpha x+t)$.
Denote 
$$
\cS^\circ\coloneqq \Spec \cO_{\Q(\alpha)}[1/2,t,D_\alpha(t)^{-1}].
$$
Let   
$$
P^+(d)=
\begin{cases}
d^2-4d+2 & \mbox{ if }d=2g+1\\
(d^2-3d+2)/2 & \mbox{ if }d=2g+2.
\end{cases}
$$
\end{notation}

\begin{theorem}\label{T:main}
Let $\alpha$ be a nonzero algebraic integer and let $F=\Q(\alpha)$.
Then for every prime $p>P^+(d)$ and $p\nmid N_{F/\Q}(\alpha)$, 
the family $\cC_\alpha$ of genus-$g$ curves 
$$
\cC_\alpha \longrightarrow  \cS^\circ
$$
is generically ordinary at each prime $\fp\mid p$ of $F$. 
\end{theorem}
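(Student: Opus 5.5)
We would work with the Hasse--Witt (Cartier--Manin) matrix of $\cC_\alpha(t)$ over $\F_\fp(t)$, where $\F_\fp$ is the residue field of $\fp$. Put $f(x)=x^d+\alpha x+t$ and $n=(p-1)/2$. By the classical description (Manin, Yui), the Hasse--Witt matrix is $H=(h_{ij})_{1\le i,j\le g}$ with $h_{ij}$ the coefficient of $x^{ip-j}$ in $f(x)^n$. The family is generically ordinary at $\fp$ if and only if $\det H\in\F_\fp[t]$ is a nonzero polynomial.

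\textbf{Step 1 (treat $\alpha$ as a variable).} Replace $\alpha$ by an indeterminate $a$ and work in $\F_p[a,t]$. By the multinomial theorem,
\[
h_{ij}=\sum_{\substack{u+v+w=n\\ du+v=ip-j}}\frac{n!}{u!\,v!\,w!}\,a^{v}t^{w}.
\]
Give $x,a,t$ the weights $1,\,d-1,\,d$. Then $f$ is weighted homogeneous, and so is each $h_{ij}$, of weight $dn-(ip-j)$. Hence $\det H(a,t)$ is weighted homogeneous. For a nonzero weighted-homogeneous polynomial in $a,t$, specializing $a$ to a nonzero element of $\F_\fp$ gives a nonzero polynomial in $t$. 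The hypothesis $p\nmid N_{F/\Q}(\alpha)$ guarantees $\alpha\not\equiv0\pmod\fp$. So it suffices to show that $\det H(a,t)\ne0$ in $\F_p[a,t]$, which is independent of $\alpha$.

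\textbf{Step 2 (extract an extremal component).} Order the monomials by $a$-degree and look at the lowest $a$-degree part of $\det H$. In $h_{ij}$ the exponent $v$ satisfies $v\equiv ip-j\pmod d$. So the minimal possible $v$ is the residue $r_{ij}\in[0,d-1]$ of $ip-j\equiv i\bar p-j \pmod d$. This depends only on $p\bmod d$, which makes the analysis a finite case split over $p \bmod d$ (and over the parity of $d$). The lowest $a$-degree part of $\det H$ is $\sum_\sigma \sgn(\sigma)\prod_i h^{\min}_{i\sigma(i)}$. Here the sum runs over the permutations $\sigma$ minimizing $\sum_i r_{i\sigma(i)}$, and $h^{\min}_{ij}$ is the single term with $v=r_{ij}$. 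If this sum is nonzero we are done. Since $n<p$, every multinomial coefficient $n!/(u!v!w!)$ is a $p$-adic unit. So the only danger is cancellation among the minimizing permutations.

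\textbf{Step 3 (the determinant of the minimal part; main obstacle).} Fix a residue class of $p$ mod $d$. Organize the minimizing permutations as the generalized diagonals of a block matrix. Within each block the entries are, after factoring out the common powers of $a,t$ and of $n!$, of the shape $1/\bigl(u_{ij}!\,w_{ij}!\bigr)$ with $u_{ij}$ affine in $i,j$ (roughly $(ip-j-r)/d$) and $w_{ij}=n-u_{ij}-r$. Such determinants $\det\bigl(1/((A_i+j)!\,(B_i-j)!)\bigr)$ evaluate in closed form, as a product of binomial-type factors and Vandermonde-type differences $\prod(A_i-A_k)$, via the standard formula $\det\binom{x_i}{j}$. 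The required nonvanishing mod $p$ then reduces to checking that all these integer factors are nonzero and lie strictly below $p$ in absolute value. I expect this to be the hardest part. I would first pin down exactly which permutations are minimizing, hoping the matrix $(r_{ij})$ is a "staircase" so that the minimal part is block-triangular. After that I would bound the largest factor occurring in the closed-form determinants. That bound should come out as $d^2-4d+2$ for odd $d$ and $(d^2-3d+2)/2$ for even $d$, where the even case has the extra symmetry making the relevant differences roughly halved. This is where the hypothesis $p>P^+(d)$ is used. If no minimizing structure works uniformly, the fallback is to use instead the top $t$-degree component, which plays the symmetric role.
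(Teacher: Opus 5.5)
Your Steps 1 and 2 are sound and match the paper's setup. By weighted homogeneity, the lowest-$a$-degree component of $\det H$ is the same as the top-$t$-degree component the paper uses, so your ``fallback'' is not a different option. The minimizing permutations are exactly those with $\sigma(i)\le s_i$, where $s_i=[pi]_d$. The coefficient of $a^m t^E$ is $u_p\Delta_p$, where $u_p=\prod_i\binom{n}{Q_i}$ is a $p$-adic unit, $Q_i=\lfloor pi/d\rfloor$, and $\Delta_p=\det_{i,j}\binom{n-Q_i}{s_i-j}$. Your reduction in Step 1 to nonvanishing in $\F_p[a,t]$ is a valid substitute for the paper's direct specialization $X=\alpha$.

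The gap is Step 3, which is the entire content of the theorem and which you leave as an expectation. Moreover, the mechanism you sketch would not work. There is no staircase or block structure: for $p\equiv -1\pmod d$ one has $s_i=d-i>g$ for all $i$, so every permutation is minimizing and the extremal coefficient is a full $g\times g$ determinant. Nor does $\det\bigl(1/((A_i+j)!\,(B_i-j)!)\bigr)$ have a product formula when $A_i,B_i$ are unrelated. Already for $g=2$, clearing row factors leaves $(A_1+2)(B_2-1)-(A_2+2)(B_1-1)$, which does not factor. Over $\Z$ your $A_i=n-Q_i-s_i$ and $B_i=s_i$ are indeed unrelated. A Vandermonde such as $\det\binom{x_i}{j}$ handles only one $j$-dependent factorial, not two.

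\textbf{First missing idea: reduce modulo $p$ before evaluating.} One has $n-Q_i\equiv\beta_i:=s_i/d-1/2\pmod p$, so $\Delta_p\equiv\det\binom{\beta_i}{s_i-j}$, and row $i$ now depends only on $s_i$. Divide row $i$ by $\binom{\beta_i}{s_i-1}$. This is nonzero because $s_i\ne d/2$, which needs an argument when $d$ is even. The entries become constant multiples of $P_j(s_i)=\prod_{r<j}(s_i-r)\prod_{r\ge j}(s_i-\lambda_r)$ with $\lambda_r=d(2r+1)/(2(d-1))\notin\Z$. This alternant equals $\prod_{i<j}(s_j-s_i)\prod_{r=1}^{g-1}\prod_{k=1}^{r}(k-\lambda_r)$.

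\textbf{Second missing idea: a cancellation is needed to reach $P^+(d)$.} Even with this product formula, bounding factor by factor does not give $P^+(d)$. Up to a factor whose primes are at most $d$, $\binom{\beta_s}{s-1}$ is $\pm\prod_{\ell=1}^{s-1}h(s,\ell)$, where $h(s,\ell)=(2d-2)s-2d\ell-d$. These factors reach $h(d-1,1)=2d^2-7d+2$; for $d=5$ this is the prime $17>P^+(5)=7$. You must cancel them against the identical factors $h(s,\ell)$, $1\le\ell\le g-1$, coming from $\prod_r(s-\lambda_r)$. Only $\ell$ between $g$ and $s-1$, or between $s$ and $g-1$, survives. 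On those ranges $|h|\le d^2-4d+2$ for odd $d$. For even $d$ every $h$ is even and $|h|/2\le(d^2-3d+2)/2$.

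Without this evaluation and this cancellation, your assertion that the bound ``should come out'' as $P^+(d)$ is unsupported, so the proposal does not yet prove the theorem.
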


\begin{remark}
Notice that the Katz family $\cK=\cC_{-d}$ is a special case;
hence Conjecture \ref{Con:Katz} follows from Theorem \ref{T:main}.
\end{remark}

\begin{remark}
The bound $P^+(d)$ cannot be weakened to $p\ge P^+(d)$ in the theorem.
The data provided by Katz in \cite{Kat18}
shows that the generic ordinarity fails for $p=P^+(d)$ 
in the following cases:
$$(d,p)=(21,359), (31,839), (51,2399),$$
and further computations suggest the same for 
$$(65,3967),(105, 10607).$$
In all of these cases, $d$ is odd and $p\equiv 2\bmod d$. 
\end{remark}

For prime $p\equiv 1\pmod d$ we have
a refinement in the following Corollary \ref{C:bound}.

\begin{corollary}\label{C:bound}
Let $\alpha$ be any algebraic integer and let $F=\Q(\alpha)$.
Then for every prime $p\equiv 1\bmod d$ 
the family $\cC_\alpha$ of genus-$g$ curves 
$$\cC_\alpha \longrightarrow \cS^\circ$$
is generically ordinary at each prime $\fp\mid p$ of $F$.
\end{corollary}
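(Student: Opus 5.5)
The plan is to reduce generic ordinarity to the nonvanishing of a single polynomial in $t$, and then detect that nonvanishing through one explicit fiber, namely $\alpha=0$, $t=1$. Set $f(x)=x^d+\alpha x+t$ and $m=(p-1)/2$. For $j=1,\dots,g$ use the basis $x^{j-1}\,dx/y$ of regular differentials; this is valid for both $d=2g+1$ and $d=2g+2$. The Cartier--Manin matrix of $\cC_\alpha(t)$ is then $A=(c_{ip-j})_{1\le i,j\le g}$, where $c_k$ is the coefficient of $x^k$ in $f(x)^m$. The family is generically ordinary at $\fp$ exactly when $H(\alpha,t):=\det A$, reduced modulo $\fp$ with $\alpha$ replaced by its residue $\bar\alpha\in\cO_F/\fp$, is a nonzero polynomial in $t$. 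Indeed, a nonzero polynomial in $t$ is nonvanishing at the generic point of the fiber of $\cS^\circ$ over $\fp$. That fiber is nonempty because $D_\alpha(t)$ has leading term $\pm d^d t^{d-1}$ and $p\nmid d$ (as $p\equiv 1\bmod d$).

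\textbf{Step 1: weighted homogeneity.} Give $x,\alpha,t$ the weights $1,d-1,d$. Then $f$ is weighted homogeneous of degree $d$, so $f^m$ has degree $dm$. Consequently $c_k$, viewed as a polynomial in $\alpha$ and $t$ with integer coefficients, is weighted homogeneous of degree $dm-k$. Each term $\prod_i c_{ip-\sigma(i)}$ in the expansion of $\det A$ has degree
$$\sum_i\bigl(dm-ip+\sigma(i)\bigr)=g\,dm-p\,\tfrac{g(g+1)}{2}+\tfrac{g(g+1)}{2},$$
which does not depend on $\sigma$. Hence $H(\alpha,t)\in\Z[\alpha,t]$ is weighted homogeneous of some degree $W$. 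Every monomial $\alpha^a t^b$ occurring in $H$ therefore satisfies $a(d-1)+bd=W$, so distinct monomials have distinct $t$-exponents $b$. In particular at most one monomial is a pure power $t^N$, and then $N=W/d$.

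\textbf{Step 2: the fiber $\alpha=0$.} When $\alpha=0$, $H(0,t)=H(0,1)\,t^N$ with $N=W/d$, again by homogeneity. The curve $y^2=x^d+1$ is ordinary exactly when $p\equiv1\bmod \operatorname{lcm}(2,d)$. When $d$ is odd, $p\equiv 1\bmod d$ together with $p$ odd already gives $p\equiv 1\bmod 2d$, so the condition holds in all our cases. One can also check this directly. The coefficients of $(x^d+1)^m$ are $\binom{m}{k}$, sitting at exponents $dk$. Since $d\mid p-1$, the index $ip-j$ is a multiple of $d$ only when $i\equiv j\bmod d$. Because $1\le i,j\le g<d$, this forces $i=j$, so $A$ is diagonal with entries $\binom{m}{(i(p-1))/d}$. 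These are nonzero mod $p$ because $m<p$. Hence $H(0,1)\not\equiv 0\bmod p$, i.e.\ the monomial $t^N$ occurs in $H$ with a $p$-adic unit coefficient $c$.

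\textbf{Step 3: conclusion for arbitrary $\alpha$.} For any residue $\bar\alpha$, possibly $0$, the coefficient of $t^N$ in $H(\bar\alpha,t)$ is still $\bar c\neq0$. This is because by Step 1 no other monomial $\alpha^a t^b$ with $a>0$ has $t$-exponent $N$. So $H(\bar\alpha,t)\neq0$ in $(\cO_F/\fp)[t]$, which proves Corollary~\ref{C:bound}. This argument uses neither $p\nmid N_{F/\Q}(\alpha)$ nor the bound $P^+(d)$.

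The main point needing care is Step 1. One must make sure that the Cartier--Manin matrix is literally given by the coefficients $c_{ip-j}$ in both parities of $d$; for $d=2g+2$ the point at infinity splits, but the standard formula is unchanged. One also needs the determinant's degree to be well defined, so that the $t^N$ coefficient is isolated. Step 2 is the classical Fermat/Jacobi-sum ordinarity fact, and it is verified directly by the diagonal computation above.
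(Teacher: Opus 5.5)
Your proof is correct, but it isolates the key coefficient by a different mechanism than the paper does.

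The paper specializes its general leading-coefficient result, Proposition~\ref{P:D_p}: $\deg_Y H_p(X,Y)\le E$ and $[Y^E]H_p(X,Y)=X^m u_p\Delta_p$. That result comes from the maximal-$t_0$ lattice points of $\cT(pi-j)$ and the Leibniz expansion restricted to $S_g'$. For $p\equiv 1\bmod d$ one has $s_i=i$, so $m=0$. The matrix of $\Delta_p$ is lower triangular with unit diagonal, so $\Delta_p=1$. The top coefficient is therefore the $p$-adic unit $u_p$, independent of $X$.

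You instead use the quasi-homogeneity of $x^d+\alpha x+t$, with weights $1,d-1,d$. This puts every monomial of $H$ on the line $a(d-1)+bd=W$. Hence the coefficient of the pure power $t^N$ cannot involve $\alpha$, and you can compute it at $\alpha=0$, where the Cartier--Manin matrix of $y^2=x^d+1$ is diagonal.

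The two computations produce the same number. Since $Q_i=i(p-1)/d$ here, your $N=\sum_i\bigl(\tfrac{p-1}{2}-\tfrac{i(p-1)}{d}\bigr)$ is the paper's $E$. Likewise your $H(0,1)=\prod_i\binom{(p-1)/2}{i(p-1)/d}$ is exactly $u_p$.

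What your route buys: it is shorter and self-contained, needing none of the Section~3 lattice-point lemmas. It shows conceptually why the residue of $\alpha$, including $0$, is irrelevant. It also gives the degree bound $\deg_t H\le N$ for free, since $a\ge 0$.

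What it does not buy is the general case. When $p\not\equiv 1\bmod d$, the top coefficient is $\alpha^m u_p\Delta_p$ with $m>0$, so setting $\alpha=0$ kills it. Homogeneity then only confines the monomials to a line and cannot compute that coefficient. The main theorem needs exactly this computation, via the product formula for $\Delta$.

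Your side remark that $y^2=x^d+1$ is ordinary \emph{exactly} when $p\equiv 1\bmod \operatorname{lcm}(2,d)$ is true but unnecessary. The direct diagonal computation is all you use.
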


\begin{remark}
Katz showed in \cite{Kat14} 
that for any Morse polynomial $h(x)$ of degree $d$ 
the family of curves $y^2=h(x)+t$ has large monodromy. 
It is straightforward to verify by definition 
that $h(x)=x^d+\alpha x$ 
in characteristic $0$ is Morse if and only if $\alpha$ is nonzero.
In characteristic $p$ it is Morse if and only if  $p\nmid d(d-1)$ and  
$\alpha$ is nonzero. 
This shows that $\cC_\alpha$ has topological monodromy of finite index in $\Sp_{2g}(\Z)$. Hence Theorem \ref{T:main} shows that
part of Conjecture 1 of \cite{Kat18} holds for this family 
$\cC_\alpha$.
\end{remark}

For $g\ge2$ we have $P^+(d)> d$. Every 
prime $p>P^+(d)>d$ implies $p\nmid d(d-1)$.
At every prime $\fp\mid p$ for which $\alpha\not\in \fp$, 
the polynomial 
$x^d+\alpha x\bmod \fp$ is Morse. 
This implies, by Katz's argument \cite{Kat14}, that 
$\cC_\alpha$ at $\fp$ has large monodromy.

\vspace{5mm}

Our result lies in the tradition of Miller's paper \cite{Mil72}. 
Miller exhibited two families over the $t$-line 
$$
\begin{cases}
        y^2&=x^{2g+1}+t x^{g+1}+x \quad\mbox{ if $p\nmid g$}\\
y^2&=x^{2g+2}+t x^{g+1}+1 \quad \mbox{ if $p\mid g$}
\end{cases}
$$
are generically ordinary. 
The first one was in fact proposed by 
J. Lubin and N. Katz (unpublished). 
For Miller's families, the coefficient matrix has the support of a permutation
matrix. By contrast, the matrix for $\cK$
is generally not monomial and 
can contain many nonzero entries, so its determinant need not reduce to a single product. 
A different and new approach is needed. 

\vspace{4mm}
This paper is organized as follows. 
We present in Section 2 how to evaluate a determinant 
$\Delta$ and prove that prime factors of $\Delta$ are bounded above by  $P^+(d)$.
In Section 3, we define the Hasse--Witt polynomial $H_{p,\alpha}$, 
of $\cC_\alpha$ over $\cO_{\Q(\alpha)}$, 
and show that it is nonzero at each prime $\fp\mid p$ 
for $p>P^+(d)$ and $p\nmid N_{\Q(\alpha)/\Q}(\alpha)$.
This is accomplished by showing 
its leading coefficient is, up to a $p$-adic unit factor, 
congruent to $\Delta$ mod $p$. 
In Section 4 we prove the generic ordinarity
of the family $\cC_\alpha(t)$. Theorem \ref{T:main2}
proves  Theorem~\ref{T:main}. 
We then deduce Katz's Conjecture \ref{Con:Katz} in 
Corollary \ref{C:integer}.
As a natural application we also derive a similar result for a two-parameter family over the affine plane
in Corollary \ref{C:plane}. 
Finally we prove Corollary~\ref{C:bound}.

\section{Prime factors of a determinant}

\subsection{Product formula for a binomial determinant}
\label{S:Product}

The goal of this subsection is to produce a product formula for a determinant $\Delta$ in Proposition \ref{P:delta}.
The core technique in the proof is to reduce it to a 
Vandermonde-type determinant, which has a natural product formula.

For any integer $1\le r\le g-1$ we define a positive 
rational number
\begin{equation}\label{E:lambda}
\lambda_r=\frac{d(2r+1)}{2(d-1)}.
\end{equation}

\begin{lemma} \label{L:1}
For $1\le r\le g-1$, $\lambda_r$ is not an integer. 
\end{lemma}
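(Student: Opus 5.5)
We argue by contradiction, using only the arithmetic of $\gcd(d,d-1)=1$. Suppose $\lambda_r=\frac{d(2r+1)}{2(d-1)}$ is an integer for some $1\le r\le g-1$. Then $2(d-1)$ divides $d(2r+1)$, so in particular $d-1$ divides $d(2r+1)$. Since $d$ and $d-1$ are coprime, this forces $d-1\mid 2r+1$. The plan is to derive a contradiction from this divisibility, comparing sizes first and then treating one small leftover case separately.

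For the size comparison, note that $2r+1$ is a positive odd integer with $3\le 2r+1\le 2g-1$. In both cases $d\in\{2g+1,2g+2\}$ we have $d-1\ge 2g>2g-1\ge 2r+1>0$. So $d-1$ cannot divide $2r+1$, since a positive multiple of $d-1$ is at least $d-1$.

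This argument already covers every case, so there is no real obstacle. The only point to check is the bound $d-1\ge 2g$ for both parities of $d$, which holds because $d\ge 2g+1$. If one wants the alternative parity route, one can also observe the following. When $d=2g+1$ is odd, $2(d-1)=4g$ is even, while $d(2r+1)$ is odd, so divisibility fails immediately. When $d$ is even, the size argument above is needed. In either case $\lambda_r\notin\Z$.
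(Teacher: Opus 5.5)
Your proof is correct and follows the paper's idea: coprimality of $d$ and $d-1$ reduces integrality of $\lambda_r$ to $d-1\mid 2r+1$, and the bound $3\le 2r+1<d-1$ rules this out. The only difference is that you pass directly to $d-1\mid d(2r+1)$, which avoids the paper's split into the case $d$ odd (using $\gcd(2(d-1),d)=1$) and the case $d$ even (using $\gcd(d-1,d/2)=1$).
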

\begin{proof}
First notice $3\le 2r+1<d-1$.
If $d$ is odd, then $(2(d-1),d)=1$. Since $2(d-1)\nmid (2r+1)$, $\lambda_r\not\in\Z$.
If $d$ is even, then $\lambda_r=\frac{\frac{d}{2}(2r+1)}{d-1}$.
Since $(d-1,\frac{d}{2})=1$ and $d-1\nmid 2r+1$, $\lambda_r\not\in\Z$.
\end{proof}

\begin{lemma}\label{L:product}
Let $
P_j(T)
=
\prod_{r=1}^{j-1}(T-r)
\prod_{r=j}^{g-1}(T-\lambda_r).$
Let
$$
P(x_1,\dots,x_g)=
\det_{1\le i,j\le g} P_j(x_i).
$$
Then 
\[
P(x_1,\dots,x_g)=
\displaystyle \prod_{i<j}(x_j-x_i)
\displaystyle \prod_{r=1}^{g-1}\prod_{k=1}^r
(k-\lambda_r).\]
\end{lemma}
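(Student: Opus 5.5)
The plan is to show that $P$ is a constant multiple of the Vandermonde determinant, and then to find the constant by evaluating both sides at one convenient point.

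\emph{Step 1: reduction to a Vandermonde.} Each $P_j(T)$ is a monic polynomial of degree $(j-1)+(g-j)=g-1$. Therefore $P(x_1,\dots,x_g)=\det P_j(x_i)$ is a polynomial of degree at most $g-1$ in each variable $x_i$. It is alternating: swapping $x_i$ and $x_{i'}$ swaps two rows of the matrix. An alternating polynomial is divisible by each $x_j-x_i$ for $i<j$, so it is divisible by $V(x)=\prod_{i<j}(x_j-x_i)$. Since $V$ already has degree exactly $g-1$ in each variable, the quotient is a constant, and $P=c\cdot V$ for some constant $c$. (Equivalently, write $P_j(T)=\sum_k a_{jk}T^{k}$; then the matrix factors as $(x_i^{k})\cdot(a_{jk})^{T}$, and $c=\det(a_{jk})$.) It remains to compute $c$.

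\emph{Step 2: specialization.} Put $x_i=i$ for $1\le i\le g$; then $V(1,\dots,g)=\prod_{j=1}^{g}(j-1)!\neq 0$. For $i\le j-1$ the factor $(T-i)$ appears in $\prod_{r=1}^{j-1}(T-r)$, so $P_j(i)=0$. Hence the matrix $(P_j(i))_{i,j}$ is lower triangular, and its determinant is the product of the diagonal entries:
$$
\prod_{j=1}^{g}P_j(j)=\prod_{j=1}^{g}\Bigl(\prod_{r=1}^{j-1}(j-r)\Bigr)\Bigl(\prod_{r=j}^{g-1}(j-\lambda_r)\Bigr)
=\prod_{j=1}^{g}(j-1)!\;\prod_{j=1}^{g}\prod_{r=j}^{g-1}(j-\lambda_r).
$$

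\emph{Step 3: comparison.} The factor $\prod_{j}(j-1)!$ equals $V(1,\dots,g)$ and cancels. In the remaining double product, interchange the order by setting $k=j$ and letting $r$ run over $1,\dots,g-1$ with $1\le k\le r$. This gives $c=\prod_{r=1}^{g-1}\prod_{k=1}^{r}(k-\lambda_r)$, which is the claimed formula.

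The only point that needs care is Step 1, namely justifying that the alternating polynomial with degree bound $g-1$ in each variable is exactly $c\cdot V$. This is standard. The specialization in Step 2 works because the nodes $1,\dots,j-1$ built into $P_j$ force triangularity. Lemma \ref{L:1} is not needed for the identity itself; it only guarantees that the resulting constant is nonzero.
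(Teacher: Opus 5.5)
Your proposal is correct and follows the paper's proof essentially step for step. Both arguments show that $P$ is a constant multiple of $\prod_{i<j}(x_j-x_i)$, using that $P$ is alternating and has degree at most $g-1$ in each $x_i$, and then pin down the constant by specializing $x_i=i$, where the matrix becomes lower triangular with diagonal entries $(i-1)!\prod_{r=i}^{g-1}(i-\lambda_r)$.
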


\begin{proof} 
The determinant $P$ is alternating in $x_1,\ldots,x_g$,
hence $\prod_{i<j}(x_j-x_i)$ divides $P$.
Since $x_i$ occurs only in the $i$-th row and each $P_j$ has degree $g-1$, we have 
$\deg_{x_i}P\le g-1$ for each $i$.
Meanwhile, $\prod_{i<j}(x_j-x_i)$ has degree exactly $g-1$ in each $x_i$, 
hence 
\begin{equation}\label{E:H}
P(x_1,\dots,x_g)
=
C\prod_{i<j}(x_j-x_i)
\end{equation}
for some constant $C$.
To compute $C$, set $x_i=i$ for $1\le i\le g$. 
The matrix $(P_j(i))_{i,j}$ is lower triangular with diagonal entries
\[
P_i(i)
=
\prod_{r=1}^{i-1}(i-r)
\prod_{r=i}^{g-1}(i-\lambda_r)
=
(i-1)!\prod_{r=i}^{g-1}(i-\lambda_r).
\]
Therefore, \eqref{E:H} becomes
\begin{align*}
P(1,2,\dots,g)
&=\prod_{i=1}^{g}P_i(i)=
\prod_{i=1}^g\left(
(i-1)!\prod_{r=i}^{g-1}(i-\lambda_r)\right)
=
\left(\prod_{i=1}^g(i-1)!\right)
\prod_{i=1}^{g}\prod_{r=i}^{g-1}(i-\lambda_r)
\\
&=
C\prod_{i<j}(j-i)=C\prod_{i=1}^g(i-1)!.
\end{align*}
Thus we get
$
C
=
\prod_{i=1}^g\prod_{r=i}^{g-1}(i-\lambda_r)
=
\prod_{r=1}^{g-1}\prod_{k=1}^r(k-\lambda_r).
$
Substituting $C$ back to \eqref{E:H}, this finishes the proof.
\end{proof}

We define for any nonnegative integer $k$,
$$
\binom{z}{k}=\frac{z(z-1)\cdots (z-k+1)}{k!}
$$
and set $\binom{z}{k}=0$ for $k<0$. 
Moreover, if $z\in\Z_{\ge 0}$ and $z<k$ then $\binom{z}{k}=0$.
For the rest of the paper 
fix a set
$$\cA \coloneqq\{1,2,\ldots,d-1\}\backslash \{\frac{d}{2}\}.$$
When $d$ is odd, the exclusion is vacuous.
Let $s_1,\ldots,s_g\in\cA$ be distinct integers.
Let 
\begin{equation}\label{E:alpha}
\beta_i\coloneqq\frac{s_i}{d}-\frac12.
\end{equation}

\begin{remark}\label{R:binom}
Note that $\frac{-1}{2}<\beta_i<\frac12$ and $\beta_i\ne 0$.
This implies that $\binom{\beta_i}{s_i-1}\ne 0$.
\end{remark}

The determinant $\Delta$ below 
will later occur as the reduction mod $p$
of a deciding factor $\Delta_p$
in the leading coefficient of the Hasse--Witt polynomial.
(See Proposition \ref{P:D_p}.)

\begin{proposition}[Product formula]
\label{P:delta}
Define
\begin{equation*}\label{E:Delta}
\Delta(s_1,\ldots,s_g)
=
\det_{1\le i,j\le g}
\binom{\beta_i}{s_i-j}.
\end{equation*}
Then we have  
\[\boxed{
\Delta(s_1,\ldots,s_g)
=
\left(\frac{-d}{d-1}\right)^G
\displaystyle \prod_{i<j}(s_j-s_i)\cdot
\displaystyle 
\left(\prod_{r=1}^{g-1}
\prod_{k=1}^r(k-\lambda_r)\right)
\cdot
\left(\prod_{i=1}^g\frac{ \binom{\beta_i}{s_i-1}}
 { \prod_{r=1}^{g-1}(s_i-\lambda_r)}\right)
}
\]
where $G=g(g-1)/2$.
Moreover, $\Delta$ is a nonzero rational number.
\end{proposition}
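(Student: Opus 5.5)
The plan is to factor each entry $\binom{\beta_i}{s_i-j}$ as a row-dependent scalar times $(-d/(d-1))^{j-1}P_j(s_i)$, with $P_j$ the polynomial of Lemma~\ref{L:product}. Then Lemma~\ref{L:product} with $x_i=s_i$ gives the formula directly.

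\emph{Step 1: a ratio identity.} For a non-integer $\beta$ and any integer $k\ge 0$ we have
\[
\binom{\beta}{k-1}=\binom{\beta}{k}\cdot\frac{k}{\beta-k+1}.
\]
When $k=0$ both sides vanish, by the convention $\binom{\beta}{-1}=0$. The denominator $\beta-k+1$ is never zero, since $\beta_i\notin\Z$ by Remark~\ref{R:binom}. Telescoping from $k=s_i-1$ down to $k=s_i-j+1$ and writing $k=s_i-r$ gives
\[
\binom{\beta_i}{s_i-j}=\binom{\beta_i}{s_i-1}\prod_{r=1}^{j-1}\frac{s_i-r}{\beta_i-s_i+r+1}.
\]
If $j>s_i$, the factor with $r=s_i$ is zero, which correctly reproduces $\binom{\beta_i}{s_i-j}=0$. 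I will state this carefully so the identity holds for every $1\le j\le g$ regardless of the size of $s_i$.

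\emph{Step 2: identify the denominators with $\lambda_r$.} Substituting $\beta_i=s_i/d-1/2$ gives
\[
\beta_i-s_i+r+1=r+\tfrac12-\tfrac{d-1}{d}s_i=-\tfrac{d-1}{d}\,(s_i-\lambda_r),
\]
using \eqref{E:lambda}. Hence
\[
\binom{\beta_i}{s_i-j}=\Bigl(\tfrac{-d}{d-1}\Bigr)^{j-1}\binom{\beta_i}{s_i-1}\prod_{r=1}^{j-1}\frac{s_i-r}{s_i-\lambda_r}.
\]
This is legitimate because $s_i\neq\lambda_r$: $s_i$ is an integer and $\lambda_r$ is not, by Lemma~\ref{L:1}. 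Multiplying the numerator and denominator by $\prod_{r=j}^{g-1}(s_i-\lambda_r)$ turns the last product into $P_j(s_i)/\prod_{r=1}^{g-1}(s_i-\lambda_r)$.

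\emph{Step 3: assemble the determinant.} The entry of row $i$, column $j$ is now
\[
\binom{\beta_i}{s_i-j}=\Bigl(\tfrac{-d}{d-1}\Bigr)^{j-1}\cdot\frac{\binom{\beta_i}{s_i-1}}{\prod_{r=1}^{g-1}(s_i-\lambda_r)}\cdot P_j(s_i),
\]
a column scalar times a row scalar times $P_j(s_i)$. Pull the row factors $\binom{\beta_i}{s_i-1}/\prod_{r=1}^{g-1}(s_i-\lambda_r)$ out of the rows. Pull the column factors $(-d/(d-1))^{j-1}$ out of the columns; their total exponent is $\sum_{j=1}^{g}(j-1)=G$. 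The remaining determinant is $\det P_j(s_i)$, which Lemma~\ref{L:product} evaluates as $\prod_{i<j}(s_j-s_i)\prod_{r=1}^{g-1}\prod_{k=1}^{r}(k-\lambda_r)$. This is exactly the boxed formula.

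\emph{Step 4: nonvanishing and rationality.} Each factor in the boxed formula is a nonzero rational number:
\begin{itemize}
\item $s_j-s_i\neq0$ because the $s_i$ are distinct;
\item $k-\lambda_r\neq 0$ and $s_i-\lambda_r\neq0$ by Lemma~\ref{L:1};
\item $\binom{\beta_i}{s_i-1}\neq0$ by Remark~\ref{R:binom}.
\end{itemize}
Hence $\Delta$ is a nonzero rational number.

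The only delicate point is Step~1 in the range $s_i<j$, where the binomial coefficients vanish. There one must check that the telescoped product really produces a zero factor, namely $s_i-r$ with $r=s_i$, and never divides by zero. Everything else is direct bookkeeping.
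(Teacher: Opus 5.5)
Your proposal is correct and follows the paper's proof essentially step for step: factor $\binom{\beta_i}{s_i-1}$ out of each row, rewrite the ratio $\prod_{r=1}^{j-1}\frac{s_i-r}{\beta_i-s_i+r+1}$ as $\left(\frac{-d}{d-1}\right)^{j-1}P_j(s_i)/\prod_{r=1}^{g-1}(s_i-\lambda_r)$, and apply Lemma~\ref{L:product}. Your explicit check of the case $j>s_i$, where the factor $r=s_i$ vanishes, is a small point the paper leaves implicit.
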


\begin{proof}
We first reduce this determinant to $P(s_1,\ldots,s_g)$ from above.
Notice from Lemma \ref{L:1} that $s_i-\lambda_r\ne 0$ since $s_i$ is integer and 
$\lambda_r$ is not, so the denominators on the right-hand side are nonzero.
Thus the right-hand side of the formula is well-defined.

Since $\binom{\beta_i}{s_i-1}\ne 0$ by Remark \ref{R:binom},
we rewrite 
\begin{equation}\label{E:Delta_R}
\Delta(s_1,\ldots,s_g)
=
\left(\prod_{i=1}^g \binom{\beta_i}{s_i-1}\right)
\det_{1\le i,j\le g} K_{ij},
\end{equation}
where 
$
K_{ij}
\coloneqq
\frac{\binom{\beta_i}{s_i-j}}{\binom{\beta_i}{s_i-1}}.
$
For $j=1$, $K_{i1}=1$; for $j\ge 2$ 
we have 
$$
K_{ij}
=
\prod_{r=1}^{j-1}
\frac{s_i-r}{\beta_i-s_i+r+1}=\left(\frac{-d}{d-1}\right)^{j-1}\frac{1}{\prod_{r=1}^{g-1}(s_i-\lambda_r)}
\cdot P_j(s_i).
$$
Thus by Lemma \ref{L:product},
\begin{align*}
\det_{1\le i,j\le g}(K_{ij})
&=
\left(\frac{-d}{d-1}\right)^G \frac{1}{\prod_{i=1}^{g}
\prod_{r=1}^{g-1}(s_i-\lambda_r)}
P(s_1,\dots,s_g)\\
&=
\left(\frac{-d}{d-1}\right)^G \cdot
\frac{
\prod_{i<j}(s_j-s_i)
 \prod_{r=1}^{g-1}\prod_{k=1}^r(k-\lambda_r)
}{
\prod_{i=1}^g\prod_{r=1}^{g-1}(s_i-\lambda_r)
}.
\end{align*}
Substituting this into \eqref{E:Delta_R} proves our formula. 

We see that $\Delta$ is a nonzero rational number 
since the $s_i$ are distinct, and $\lambda_r$ 
is not integer by Lemma \ref{L:1}, so  $s_i-s_j\ne 0$ and $k-\lambda_r\ne 0$.
As we have seen above, $\binom{\beta_i}{s_i-1}\ne 0$,
hence all factors in the numerator are nonzero. 
\end{proof}

\subsection{A quadratic bound for the prime factors}

To obtain the desired bound $P^+(d)$ it is not sufficient to 
simply estimate each factor in the product formula of $\Delta$ in Proposition 
\ref{P:delta}. 
It is necessary to take certain cancellations into 
account (see Lemma \ref{L:prime}). 

Consider two regions in $\Z^2$:
\begin{align*}
U &=\{(x,y)\in\Z^2 \mid  g+1\le x\le d-1, g\le y\le x-1, x\in \cA\}\\
V &=\{(x,y)\in\Z^2 \mid  1\le x < g, x\le y\le g-1, x\in \cA\}.
\end{align*}

\begin{lemma}\label{L:h}
Let $h(x,y)=(2d-2)x-2dy-d$.
Then $h$ is an integral-valued function on $U$ and $V$
such that
$$
\max_{U}|h(x,y)|=
\begin{cases}
d^2-4d+2 & \mbox{ if $d=2g+1$}\\
d^2-3d+2 &  \mbox{ if $d=2g+2$}. 
\end{cases}
$$
$$
\max_{V}|h(x,y)|=
\begin{cases}
d^2-4d+2 &\mbox{ if $d=2g+1$}\\
d^2-5d+2 &\mbox{ if $d=2g+2$}.
\end{cases}
$$
\end{lemma}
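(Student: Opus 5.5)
Since $h$ is a linear polynomial with integer coefficients, it is clearly integer-valued on $\Z^2$, hence on $U$ and $V$. The plan is to use monotonicity: $h$ is strictly increasing in $x$ (coefficient $2d-2>0$) and strictly decreasing in $y$ (coefficient $-2d<0$). So on each region, which is a set of lattice points in a triangle, the extreme values of $h$ occur at corner points. The only subtlety is checking that those corners survive the constraint $x\in\cA$, i.e.\ $x\ne d/2$. Throughout I would substitute $2g=d-1$ (odd case) or $2g=d-2$ (even case).

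\emph{Region $U$.} The maximum of $h$ should occur at $x=d-1$, $y=g$. This point lies in $U$ because $g\le d-2$ and $d-1\ne d/2$ for $d\ge 5$. Its value is $(2d-2)(d-1)-2dg-d$.
\begin{itemize}
\item For $d=2g+1$ this equals $(d-1)(d-2)-d=d^2-4d+2$.
\item For $d=2g+2$ it equals $2(d-1)^2-d(d-2)-d=d^2-3d+2$.
\end{itemize}
For the minimum, for fixed $x$ the smallest value occurs at $y=x-1$, where $h=d-2x$. This is minimized at $x=d-1$, giving $h=2-d$. So $\min_U h\ge -(d-2)$. Since $d-2\le d^2-4d+2$ for $d\ge 5$, we get $\max_U|h|=\max_U h$, which is the stated value.

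\emph{Region $V$.} Here $y\ge x$, so $h\le (2d-2)x-2dx-d=-2x-d<0$ on all of $V$. Hence $|h|=-h$, and we need the minimum of $h$. It occurs at the smallest $x$ and the largest $y$, namely $(x,y)=(1,g-1)$. This point is in $V$ since $g\ge 2$ and $1\ne d/2$. Its value is $2d-2-2d(g-1)-d=3d-2-2dg$.
\begin{itemize}
\item For $2g=d-1$ this equals $-(d^2-4d+2)$.
\item For $2g=d-2$ it equals $-(d^2-5d+2)$.
\end{itemize}
These give the stated maxima of $|h|$ on $V$.

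The main thing to watch is not the algebra but the bookkeeping. One must confirm that the extremal corners really belong to the regions after the exclusion $x\ne d/2$, and that on $U$ the negative side of $h$ never beats the positive side. Both reduce to the inequalities $d\ge 5$ and $g\ge 2$ used above.
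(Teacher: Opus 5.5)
Your proof is correct and is essentially the paper's argument: both locate the extrema of the linear function $h$ at corner points of $U$ and $V$ and evaluate $h$ there. Your use of the signs of the coefficients is a slightly more economical version of the paper's enumeration of all convex-hull vertices. With it, only $(d-1,g)$, the diagonal $y=x-1$, and $(1,g-1)$ matter, and you also observe that $h<0$ on $V$. This also avoids having to determine the vertex set after the exclusion $x\neq d/2$.
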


\begin{proof}
(1) We opt to give a detailed and direct proof for 
the first case about $U$.
Since $h$ is a linear function in both variables, its extreme values are achieved
at vertices of the convex hull of its (lattice) domain $U$. 
We separate into two cases:\\
\noindent $\bullet$ 
If $d=2g+1$ then the vertex set of the convex hull of 
$U$ is $\{(g+1,g),(2g,g), (2g,2g-1)\}$.
Let 
$h_{\min}$ and $h_{\max}$ denote the minimal and maximal values at 
these vertices, respectively. Then
\begin{align*}
h_{\min}&=\min(h(g+1,g),h(2g,g),h(2g,2g-1))=2-d,\\
h_{\max}&=\max(h(g+1,g),h(2g,g),h(2g,2g-1))= d^2-4d+2.
\end{align*}
It follows that $\max_U |h(x,y)|=\max(h_{\max},-h_{\min})=d^2-4d+2.$\\
\noindent $\bullet$ If $d=2g+2$ then $x\ne g+1$, so the vertex set of $U$ 
is $\{(g+2,g),(2g+1,g),(2g+1,2g), (g+2,g+1)\}$. Then
$h_{\min}=2-d$ and $h_{\max}=d^2-3d+2$. It follows that $\max_U|h(x,y)|=d^2-3d+2$.

\vspace{2mm}

(2) The vertex set of the convex hull of $V$ is 
$\{(1,1),(1,g-1), (g-1,g-1)\}$ for all $d$.
For $d=2g+1$, $h_{\min}=h(1,g-1)=-(d^2-4d+2)$ 
and $h_{\max}=h(1,1)=-d-2$, hence $\max_V|h(x,y)|=d^2-4d+2$.
For $d=2g+2$, we have
$h_{\min}=h(1,g-1)=-(d^2-5d+2)$ and 
$h_{\max}=h(1,1)=-d-2$, thus in this case 
$\max_V|h(x,y)|=d^2-5d+2$. 
\end{proof}

Let
\begin{eqnarray*}
J_r&=&\prod_{k=1}^r(k-\lambda_r) \qquad \mbox{for $1\le r\le g-1$,}\\
R_s &=&\frac{\binom{\frac{s}{d}-\frac{1}{2}}{s-1}}{\prod_{r=1}^{g-1}(s-\lambda_r)}\qquad \mbox{ for $s\in \cA$}.
\end{eqnarray*}
For any $\zeta\in\Q^*$, by a {\it prime factor} of $\zeta$, 
we mean a prime $p$ such that $v_p(\zeta)\ne 0$.

\begin{lemma}\label{L:prime}
Every prime factor of
$J_r$ or $R_s$ 
is at most $P^+(d)$.
\end{lemma}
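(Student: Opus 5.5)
The plan is to write every factor of $J_r$ and $R_s$ as a value of the linear form $h(x,y)=(2d-2)x-2dy-d$ of Lemma \ref{L:h}, divided by a product of small integers. Lemma \ref{L:h} then bounds the primes in the numerators, and the remaining denominators only involve primes $\le d<P^+(d)$.

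\emph{The factor $J_r$.} A direct computation gives
\[
k-\lambda_r=\frac{2(d-1)k-d(2r+1)}{2(d-1)}=\frac{h(k,r)}{2(d-1)} .
\]
For $1\le k\le r\le g-1$ the point $(k,r)$ lies in $V$, since $k<g<d/2$ forces $k\in\cA$. So every prime factor of $J_r$ divides $2(d-1)$ or some $h(x,y)$ with $(x,y)\in V$.
\begin{itemize}
\item For $d$ odd, Lemma \ref{L:h} gives $|h|\le d^2-4d+2$ on $V$.
\item For $d$ even, $h$ is even, so $h/2$ is an integer with $|h/2|\le (d^2-5d+2)/2<P^+(d)$.
\end{itemize}
Since $d-1<P^+(d)$ for $g\ge 2$, this settles $J_r$.

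\emph{The factor $R_s$: the key cancellation.} Expand
\[
\binom{\beta}{s-1}=\frac{1}{(s-1)!}\prod_{m=0}^{s-2}(\beta-m),\qquad \beta=\frac{s}{d}-\frac12 .
\]
Substituting $m=s-1-r$ gives
\[
\beta-(s-1-r)=-\frac{(d-1)s}{d}+r+\frac12=-\frac{d-1}{d}\,(s-\lambda_r).
\]
So the numerator factors with $r=s-1-m\in[1,s-1]$ match the denominator factors $s-\lambda_r$ with $r\in[1,g-1]$, up to the constant $-(d-1)/d$. I expect this cancellation to be the main point: without it the numerator factors $2s-d-2dm$ reach size about $2d^2$, which is far above $P^+(d)$.

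\emph{Case $s\le g$.} All numerator factors cancel. What remains in the denominator is $\prod_{r=s}^{g-1}(s-\lambda_r)$, whose factors are $h(s,r)/(2(d-1))$ with $(s,r)\in V$. This is handled exactly as for $J_r$.

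\emph{Case $s\ge g+1$.} All denominator factors cancel. The leftover numerator factors are those with $r\in[g,s-1]$, and these equal
\[
-\frac{d-1}{d}\,(s-\lambda_r)=\frac{h(s,r)}{-2d},\qquad (s,r)\in U .
\]
When $d$ is even, $s=d/2=g+1$ is excluded from $\cA$, consistent with the definition of $U$. Lemma \ref{L:h} bounds these factors as follows.
\begin{itemize}
\item For $d$ odd, $|h|\le d^2-4d+2=P^+(d)$.
\item For $d$ even, $|h/2|\le (d^2-3d+2)/2=P^+(d)$.
\end{itemize}

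\emph{Remaining factors.} In both cases the other contributions are $(s-1)!$, powers of $2$, $d$ and $d-1$. All their primes are at most $d-1<P^+(d)$, which completes the proof.
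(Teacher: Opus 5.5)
Your proposal is correct and follows the paper's proof essentially step for step. Your substitution $m=s-1-r$ is the paper's reindexing $\ell\mapsto s-\ell$ that produces the cancellation in $\Rsd$, and your split by $s$ versus $g$, with Lemma~\ref{L:h} applied on $U$ and $V$ (halving $h$ when $d$ is even), is the same. One harmless slip: the leftover constant factors involve $d$, which may itself be prime (e.g.\ $d=5,7$), so their primes are bounded by $d$ rather than $d-1$; this still suffices because $d<P^+(d)$ for $g\ge 2$.
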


\begin{proof}
(1) We first consider $J_r$.
Notice
$$
k-\lambda_r=\frac{2(d-1)k-(2r+1)d}{2(d-1)}=\frac{h(k,r)}{2(d-1)}
$$
where $h(k,r)$ ranges over 
$W:=\{(k,r)\in\Z^2| 1\le k\le r\le g-1\}$, whose convex hull has  
vertices $\{(1,1), (1,g-1), (g-1,g-1)\}$.
When $d=2g+1$,  $|h(k,r)|\le d^2-4d+2$;  
when $d=2g+2$, each $h(k,r)$ is even, so each prime factor is bounded by 
 $\max\{2,|h(k,r)/2|\}\le (d^2-5d+2)/2$.
On the other hand, prime factors in the denominator of $k-\lambda_r$ are bounded by 
$d-1\le P^+(d)$.
This proves the statement for $J_r$.

(2) We now consider $R_s$.
Write its numerator and denominator 
as  quotients of products of integers:
\begin{align*}
\prod_{r=1}^{g-1}(s-\lambda_r)
&=\frac{\prod_{r=1}^{g-1} ((2s-2r-1)d-2s)}{(2(d-1))^{g-1}}\\
\binom{\frac{s}{d}-\frac{1}{2}}{s-1}&=\frac{\frac{2s-d}{2d}\cdot
\frac{2s-d-2d}{2d}\cdot \frac{2s-d-4d}{2d} \cdot
\cdots \frac{2s-d-2(s-2)d}{2d}}{(s-1)!}\nonumber\\
&=\frac{\prod_{\ell=1}^{s-1}((2\ell-1)d-2s)}{(-1)^{s-1}(s-1)!(2d)^{s-1}}.
\end{align*}
Taking their quotient, we have the following factorization:
$$
R_s = \frac{2^{g-s}(d-1)^{g-1}}{(-1)^{s-1}(s-1)!d^{s-1}}\Rsd
$$
where
\begin{equation*}
\Rsd:=\frac{\prod_{\ell=1}^{s-1}((2\ell-1)d-2s)}{\prod_{r=1}^{g-1} ((2s-2r-1)d-2s)}
\stackrel{(\star)}{=}
\frac{\prod_{\ell=1}^{s-1}h(s,\ell)}{\prod_{\ell=1}^{g-1}h(s,\ell)}.
\end{equation*}
The last equality $(\star)$ is by
reindexing the numerator via $\ell\mapsto s-\ell$, 
as it permutes $\{1,\ldots,s-1\}$.
The first factor of $R_s$ contains no prime factor $>d$,
so it remains to consider $\Rsd$.
The function $h(s,\ell)$ is integer-valued, 
its domain is either $U$ or $V$ depending on where $s$ lies:\\
\noindent 
(case 1): Suppose $g\le s\le d-1$. 
Then 
$\Rsd=\prod_{\ell=g}^{s-1}h(s,\ell).
$
For $s=g$, $\Rsd=1$, there are no remaining factors. 
It remains to consider $g+1\le s\le d-1$. Hence the function $h(s,\ell)$ ranges in the domain $U$. By Lemma \ref{L:h},
for $d=2g+1$, we have $|h(s,\ell)|\le d^2-4d+2$;
for $d=2g+2$, the integer $h(s,\ell)$ is even on the domain $U$, 
so every odd prime factor already divides $\frac{h(s,\ell)}{2}$.
Hence each prime factor is bounded by $\max\{2,|\frac{h(s,\ell)}{2}|\}
\le (d^2-3d+2)/2$ 
by the same lemma.
  
\noindent
(case 2): Suppose $1\le s<g$. 
Then 
$
\Rsd =\prod_{\ell=s}^{g-1}h(s,\ell)^{-1}.
$ 
In this case the function $h(s,\ell)$ has its domain $V$.
For $d=2g+1$, we have $|h(s,\ell)|\le d^2-4d+2$;
For $d=2g+2$, the integer $h(s,\ell)$ is even on its domain,
hence each odd prime factor divides $h(s,\ell)/2$ and hence
every prime factor is at most 
$\max\{2,|h(s,\ell)/2|\}\le (d^2-5d+2)/2$, by Lemma \ref{L:h}.

In summary, we have shown that for $d=2g+1$, any prime factor 
$p\le \max(d^2-4d+2,d)=P^+(d)$; for $d=2g+2$, 
any prime factor $p\le \max((d^2-3d+2)/2,d)=P^+(d)$.
\end{proof}

\begin{prop}\label{P:determinant} 
Let $s_1,\ldots,s_g\in \cA$ be distinct integers.
Each prime factor of $\Delta$ is at most $P^+(d)$.
If $p>P^+(d)$, then $\Delta\in\Z^*_{(p)}$ and 
$\Delta \not\equiv 0\pmod p$. 
\end{prop}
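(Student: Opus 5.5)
The plan is to read off the prime factors of $\Delta(s_1,\ldots,s_g)$ directly from the product formula of Proposition~\ref{P:delta}. Rewriting that formula with the notation $J_r$ and $R_s$ of Lemma~\ref{L:prime}, we have
\[
\Delta(s_1,\ldots,s_g)=\left(\frac{-d}{d-1}\right)^G\prod_{i<j}(s_j-s_i)\cdot\prod_{r=1}^{g-1}J_r\cdot\prod_{i=1}^g R_{s_i}.
\]
Since $v_p$ is additive on $\Q^*$, any prime $p$ with $v_p(\Delta)\neq 0$ must satisfy $v_p\neq 0$ on at least one of these factors. So it suffices to bound the prime factors of each factor separately by $P^+(d)$.

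We treat the factors in turn.

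\begin{itemize}
\item The prefactor $\left(\frac{-d}{d-1}\right)^G$ only involves primes dividing $d$ or $d-1$. These are at most $d$, and for $g\ge 2$ one checks that $d\le P^+(d)$: for $d=2g+1\ge 5$ we have $d^2-4d+2\ge d$, and for $d=2g+2\ge 6$ we have $(d^2-3d+2)/2\ge d$.
\item The Vandermonde factor $\prod_{i<j}(s_j-s_i)$ is a nonzero integer, since the $s_i$ are distinct. Each difference satisfies $|s_j-s_i|\le d-2$ because $s_i\in\cA\subset\{1,\ldots,d-1\}$, so its prime factors are below $d$.
\item The factors $J_r$ and $R_{s_i}$ are covered exactly by Lemma~\ref{L:prime}, whose prime factors are at most $P^+(d)$.
\end{itemize}

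This proves the first assertion.

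For the second assertion, let $p>P^+(d)$. By the first part, $p$ is not a prime factor of $\Delta$, so $v_p(\Delta)=0$. Proposition~\ref{P:delta} already shows that $\Delta$ is a nonzero rational number. Hence $\Delta\in\Z_{(p)}^*$, and its reduction modulo $p$ is a well-defined nonzero element of $\F_p$, i.e.\ $\Delta\not\equiv 0\pmod p$.

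There is no serious obstacle here, since the hard cancellation work was done in Lemma~\ref{L:prime}. The only point needing care is the comparison $d\le P^+(d)$ (and $d-2<P^+(d)$) for the small primes coming from the prefactor and the Vandermonde term. This is a trivial quadratic inequality for $g\ge 2$.
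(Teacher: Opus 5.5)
Your proposal is correct and matches the paper's proof: you write $\Delta$ via the product formula of Proposition~\ref{P:delta}, bound the primes in the prefactor and the Vandermonde factor by $d$, and apply Lemma~\ref{L:prime} to $\prod_r J_r\prod_i R_{s_i}$ to get $v_p(\Delta)=0$ for $p>P^+(d)$. Your explicit checks that $d\le P^+(d)$ for $g\ge 2$ and that $\Delta\neq 0$ spell out what the paper's proof uses implicitly.
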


\begin{proof}
Since $1\le s_i,s_j\le d-1$, we have $|s_j-s_i|\le d-2<d.$ 
The factors $s_i-s_j$ and the prefactor $(-d/(d-1))^G$ contribute only prime factors at most $d$.
The remaining factor is
$\prod_{r=1}^{g-1}J_r\prod_{i=1}^gR_{s_i}$, whose prime 
factors
are at most $P^+(d)$ by Lemma \ref{L:prime}.
This proves the first statement.

For any $p>P^+(d)$, our first statement
implies that both the numerator and denominator of $\Delta$
are $p$-adic units.
Thus $\Delta\in\Z_{(p)}^*$ and  $\Delta\not\equiv 0\bmod p$.
\end{proof}

\section{Hasse--Witt polynomial over $\Z$}

For the rest of the paper let $p$ be a prime coprime to $2d$. 
In this section, we define Hasse--Witt polynomial with integral coefficients in one or two variables, then show that they are nonzero 
at primes lying over $p$ when $p$ is large enough. 

We use 
the following notation for the rest of the paper:
$$Q_i=\pfloor{\frac{p\,i}{d}}, \quad s_i=[p\,i]_d, \quad \mbox{  for $1\le i\le g$}.$$
Here $[N]_d$ represents the least nonnegative residue $N\bmod d$.
Once $p$ is given, $s_i$ can be considered a function of $i$.

\subsection{Leading term in the coefficient polynomial}

\begin{lemma}\label{L:S_d}
Then $s_1,\ldots,s_g$ are distinct elements in $\cA$.
\end{lemma}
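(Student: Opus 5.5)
The plan is to check three things for $s_i=[p\,i]_d$: each $s_i$ is nonzero, none equals $d/2$, and they are pairwise distinct. Throughout we use the standing assumption that $p$ is coprime to $2d$, so $p$ is odd and invertible modulo $d$.

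\emph{Step 1: $s_i\ne 0$.} Since $\gcd(p,d)=1$, multiplication by $p$ is a bijection on $\Z/d\Z$. For $1\le i\le g<d$ we have $i\not\equiv 0\pmod d$. Hence $p\,i\not\equiv 0\pmod d$, and so $1\le s_i\le d-1$.

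\emph{Step 2: distinctness.} Suppose $s_i=s_j$ with $1\le i<j\le g$. Then $p(j-i)\equiv 0\pmod d$. Invertibility of $p$ gives $d\mid j-i$. But $0<j-i\le g-1<d$, which is a contradiction.

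\emph{Step 3: $s_i\ne d/2$.} This only matters when $d=2g+2$ is even. Suppose $s_i=d/2$, so $p\,i\equiv d/2\pmod d$. Then $2p\,i\equiv 0\pmod d$, and since $p$ is invertible mod $d$, we get $d\mid 2i$. This is impossible, because $2\le 2i\le 2g=d-2<d$.

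Alternatively, one can argue with the bijection directly: the preimage of $d/2$ under multiplication by $p$ is $d/2$ itself. Indeed, $p$ is odd, so $p\cdot\frac d2=\frac{p-1}{2}d+\frac d2\equiv \frac d2\pmod d$. And $d/2=g+1$ does not lie in $\{1,\dots,g\}$.

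None of the steps is a real obstacle. The only point needing care is Step 3, where we must use both that $p$ is odd (coprime to $2d$) and that $i\le g<d/2$ when $d$ is even.
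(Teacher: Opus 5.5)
Your proof is correct and follows the paper's route: nonvanishing and distinctness come from $p$ being invertible modulo $d$, and your alternative argument for $s_i\neq d/2$ is exactly the paper's argument. The paper notes that multiplication by an odd unit mod $d$ fixes $d/2$, using the odd inverse $b$ of $p \bmod d$, which forces $i\equiv d/2=g+1$. Your main Step 3 is a slightly shorter variant: doubling gives $d\mid 2i$ with $0<2i<d$, and this needs only $\gcd(p,d)=1$, not the parity of $p$, despite your closing remark.
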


\begin{proof}
Since $p$ and $d$ are coprime, the integers $s_i$'s are distinct and lie in $\{1,2,\ldots,d-1\}$ for $i=1,\dots,g$.
It remains to show that for $d=2g+2$ we have $s_i\ne d/2$.
Suppose there exists $1\le i\le g$ such that $\frac{d}{2}=s_i$.
Write $a=(p\bmod d)$ for the least nonnegative residue, so $a$ is coprime to $d$ hence there exists $1\le b\le d-1$ such that $ab\equiv 1\bmod d$.
Since $d$ is even, $a$ and $b$ are both odd. 
On the other hand, $ai \equiv\frac{d}{2}\pmod d$. Multiplying $b$ on both sides,
$
i\equiv b\frac{d}{2}\pmod d.
$
Since $b$ is an odd integer, we have $(b-1)\frac{d}{2}\equiv 0\bmod d$, hence 
$
i\equiv b\frac{d}{2}\equiv \frac{d}{2}\pmod d.
$
Since $1\le i\le g$, this congruence implies $i=\frac{d}{2}=\frac{2g+2}{2}=g+1$.
This contradicts that $1\le i\le g$. Therefore, $s_i\neq \frac{d}{2}$.
\end{proof}

For any integer $N$ we define a finite set of lattice points in $\R^3$:
\[
\cT(N)=\left\{(t_0,t_1,t_d)\in\mathbb Z_{\ge 0}^3:
        d t_d+t_1=N,\quad t_0+t_1+t_d=\frac{p-1}{2}\right\}.
\]
This set may be empty; for example, it is empty if $N< 0$.

%First we recall an observation below. 

\begin{lemma}\label{L:unique}
Assume $\cT(N)\neq \varnothing$. Then 
$\cT(N)$ contains a unique  element
with the maximal $t_0$-coordinate, that is,
$(t_0,t_1,t_d)=(\frac{p-1}{2}-q-r,r,q)$
where $q=\pfloor{N/d}$ and $r=[N]_d$.
\end{lemma}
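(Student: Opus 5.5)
Since $t_0=\frac{p-1}{2}-t_1-t_d$ on $\cT(N)$, maximizing $t_0$ is the same as minimizing $t_1+t_d$. Using the constraint $t_1=N-dt_d$, we have
\[
t_1+t_d=N-(d-1)t_d .
\]
Because $d\ge 5>1$, this is a strictly decreasing function of $t_d$. So over $\cT(N)$, maximizing $t_0$ is equivalent to maximizing $t_d$. Strict monotonicity gives uniqueness, since an element of $\cT(N)$ is determined by its $t_d$-coordinate via $t_1=N-dt_d$ and $t_0=\frac{p-1}{2}-t_1-t_d$.

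Next we identify the largest admissible $t_d$. Any element of $\cT(N)$ has $t_1=N-dt_d\ge 0$, so $t_d\le N/d$ and hence $t_d\le q=\pfloor{N/d}$. The candidate $t_d=q$ gives $t_1=N-dq=r=[N]_d\ge 0$ and $t_0=\frac{p-1}{2}-q-r$. It remains to check that this candidate actually lies in $\cT(N)$, that is, $t_0\ge 0$; the other coordinates are clearly nonnegative integers.

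This is the only step needing care, and it uses $\cT(N)\neq\varnothing$. Take any $(t_0',t_1',t_d')\in\cT(N)$. Then $t_d'\le q$, and
\[
t_1'+t_d'=N-(d-1)t_d'\ge N-(d-1)q=q+r .
\]
Therefore
\[
\frac{p-1}{2}-q-r\ge \frac{p-1}{2}-t_1'-t_d'=t_0'\ge 0 .
\]
So $(\frac{p-1}{2}-q-r,\,r,\,q)\in\cT(N)$. By the monotonicity above, it is the unique element with maximal $t_0$-coordinate.
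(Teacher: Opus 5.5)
Your proof is correct and is essentially the paper's argument. The paper parametrizes the solutions of $t_1+dt_d=N$ by $\ell=q-t_d\ge 0$ and notes that $t_0=\frac{p-1}{2}-q-r-\ell(d-1)$ is strictly decreasing in $\ell$; it then uses an arbitrary feasible point to show $\ell=0$ is feasible, which is exactly your monotonicity-in-$t_d$ plus comparison step (the one point you left implicit, $q\ge 0$, follows from your own inequality $0\le t_d'\le q$).
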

\begin{proof}

Write \(N=qd+r\) with \(0\le r<d\). Every integral solution of
\(t_1+dt_d=N\) is of the form
$
t_d=q-\ell, t_1=r+d\ell
$
for some \(\ell\in\mathbb Z\). Since \(t_1\ge0\) and \(0\le r<d\),
we must have \(\ell\ge0\). Moreover,
$
t_0=\frac{p-1}{2}-q-r-\ell(d-1).
$
Thus \(t_0\) strictly decreases as \(\ell\) increases. Since
\(\cT(N)\neq\varnothing\), some feasible \(\ell\ge0\) exists; replacing
it by \(\ell=0\) increases both \(t_d\) and \(t_0\), so \(\ell=0\)
is feasible. It is therefore the unique value maximizing \(t_0\).
\end{proof}

Set $\deg(0)=-\infty$.
Define 
$\delta_{ij}=\1_{\{j>s_i\}}\coloneqq 1$ if $j>s_i$ and $0$ if $j\le s_i$.
Denote 
\begin{equation}\label{E:eta}
q_{ij}=Q_i-\delta_{ij},
\qquad
r_{ij}=s_i-j+d\delta_{ij},\qquad
\eta_{ij}=\frac{p-1}{2}-q_{ij}-r_{ij}.
\end{equation}
Applying the above lemma to $N=pi-j$, we derive the following.

\begin{lemma}\label{L:pi-j}
Let $1\le i,j\le g$. 
If $\cT(p\,i-j)\ne \varnothing$,
then it contains a unique element with maximal $t_0$-coordinate, 
that is, 
$
(t_0,t_1,t_d)=
\left(\eta_{ij},r_{ij},
q_{ij}\right).
 $
\end{lemma}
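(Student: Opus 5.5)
The plan is to apply Lemma \ref{L:unique} with $N=p\,i-j$. What remains is to identify $q=\pfloor{N/d}$ and $r=[N]_d$ with $q_{ij}$ and $r_{ij}$ from \eqref{E:eta}. Once $(\frac{p-1}{2}-q-r,\,r,\,q)=(\eta_{ij},r_{ij},q_{ij})$, uniqueness and maximality of $t_0$ follow directly from Lemma \ref{L:unique}, since $\cT(p\,i-j)\neq\varnothing$ by hypothesis.

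To compute the division of $p\,i-j$ by $d$, start from $p\,i=Q_i d+s_i$ with $0\le s_i\le d-1$. This is just the definition of $Q_i$ and $s_i$; by Lemma \ref{L:S_d} we in fact have $1\le s_i\le d-1$. Then
\[
p\,i-j=Q_i d+(s_i-j).
\]
Split into two cases according to $\delta_{ij}$.

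If $j\le s_i$, so $\delta_{ij}=0$, then $0\le s_i-j\le s_i-1<d$. Hence $q=Q_i=q_{ij}$ and $r=s_i-j=r_{ij}$.

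If $j>s_i$, so $\delta_{ij}=1$, rewrite $p\,i-j=(Q_i-1)d+(s_i-j+d)$. We need $0\le s_i-j+d<d$. The upper bound holds because $s_i-j<0$. The lower bound holds because $j\le g<d$ and $s_i\ge 1$, so $s_i-j+d\ge 1+d-g>0$. Hence $q=Q_i-1=q_{ij}$ and $r=s_i-j+d=r_{ij}$.

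In both cases $t_0=\frac{p-1}{2}-q-r=\eta_{ij}$, which completes the identification. The only step needing care is checking that the remainder lands in $[0,d)$ in the second case, and this uses only $1\le j\le g<d$. There is no real obstacle; the lemma is a direct specialization of Lemma \ref{L:unique}.
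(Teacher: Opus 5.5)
Your proposal is correct and follows the same route as the paper: both write $p\,i-j$ by the division algorithm in the two cases $s_i\ge j$ and $s_i<j$, which gives $(q,r)=(q_{ij},r_{ij})$, and then apply Lemma~\ref{L:unique}. Your explicit check that $0\le s_i-j+d<d$ in the second case fills in a step the paper leaves implicit.
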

\begin{proof}
Division algorithm gives 
$$
pi-j =
\begin{cases}
Q_id+(s_i-j) & \mbox{ if $s_i\ge j$}\\
(Q_i-1)d+(d+s_i-j) & \mbox{ if $s_i<j$}.
\end{cases}
$$
The rest follows from Lemma \ref{L:unique}.
\end{proof}

For each pair $(i,j)$ with $1\le i,j \le g$, 
we define a coefficient polynomial in $\Z[X,Y]$:
\begin{equation}\label{E:Mij}
 \boxed{   M_{ij}(X,Y) \coloneqq 
    \sum_{(t_0,t_1,t_d)\in \cT(pi-j)}
    \binom{\frac{p-1}{2}}{t_0,t_1,t_d}X^{t_1}Y^{t_0},}
\end{equation}
or equivalently,
\begin{equation*}
M_{ij}(X,Y) =
[x^{pi-j}]\bigl(x^d+Xx+Y\bigr)^{(p-1)/2}.
\end{equation*}

For each pair $(i,j)$
we denote:
\begin{align}
b_{ij}&\coloneqq
\displaystyle\binom{\frac{p-1}{2}}{q_{ij}} 
\binom{\frac{p-1}{2}-q_{ij}}{r_{ij}}.
\label{E:b}
\end{align}

\begin{lemma}\label{L:entry}
If $\eta_{ij}<0$ then $M_{ij}(X,Y)=0$. 
We have $\cT(p\,i-j)\ne \varnothing$ if and only if $\eta_{ij}\ge 0$. 
If $\eta_{ij}\ge 0$, then 
$$
\deg_Y(M_{ij}(X,Y))=  \eta_{ij},\qquad
[Y^{\eta_{ij}}]M_{ij}(X,Y)=X^{r_{ij}}b_{ij}.
$$
In particular, for every algebraic integer $\alpha$, 
$$\deg_Y M_{ij}(\alpha,Y)\le \eta_{ij},
\qquad
[Y^{\eta_{ij}}]M_{ij}(\alpha,Y)=\alpha^{r_{ij}}b_{ij};
$$
if $\alpha\ne 0$, then equality holds in the degree bound.
\end{lemma}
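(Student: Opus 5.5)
The proof rests on Lemma \ref{L:pi-j} together with the parametrization of $\cT(pi-j)$ used in the proof of Lemma \ref{L:unique}. By Lemma \ref{L:pi-j}, the division of $pi-j$ by $d$ has quotient $q_{ij}$ and remainder $r_{ij}\in[0,d-1]$. So every integral solution of $t_1+dt_d=pi-j$ has the form $t_d=q_{ij}-\ell$, $t_1=r_{ij}+d\ell$ with $\ell\ge 0$. Substituting into $t_0+t_1+t_d=\frac{p-1}{2}$ gives
\[
t_0=\eta_{ij}-\ell(d-1).
\]

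First I prove the equivalence. If $\eta_{ij}\ge 0$, take $\ell=0$. Then $t_0=\eta_{ij}\ge 0$ and $t_1=r_{ij}\ge0$, so it remains to check $t_d=q_{ij}\ge 0$. Here $q_{ij}=\pfloor{(pi-j)/d}$, and $pi-j\ge p-g>0$ because $p>d>g$ (recall $p$ is coprime to $2d$; for the primes of interest $p>P^+(d)>d$). If one wants to avoid that assumption, the case $pi-j<0$ is excluded directly: then $q_{ij}<0$ and
\[
\eta_{ij}=\tfrac{p-1}{2}-q_{ij}-r_{ij}
\]
must be checked separately. I would simply note that $p\ge 3$ and $i\ge1$, $j\le g$, so $pi-j\ge p-g$; the case $p<g$ may need a one-line remark that $q_{ij}\ge 0$ follows from $\eta_{ij}\ge0$ combined with the equation. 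This nonnegativity check is the only delicate point. Conversely, if $\cT(pi-j)\neq\varnothing$, then Lemma \ref{L:pi-j} shows the maximizing element has $t_0=\eta_{ij}$, so $\eta_{ij}\ge0$. Hence $\eta_{ij}<0$ forces $\cT(pi-j)=\varnothing$, and $M_{ij}=0$ as an empty sum.

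Next, the degree and leading coefficient. When $\eta_{ij}\ge0$, each monomial of $M_{ij}$ is $\binom{\frac{p-1}{2}}{t_0,t_1,t_d}X^{t_1}Y^{t_0}$ with $t_0=\eta_{ij}-\ell(d-1)$. The $Y$-exponent $\eta_{ij}$ occurs only for $\ell=0$, by the uniqueness in Lemma \ref{L:pi-j}, and all other monomials have strictly smaller $Y$-degree. The coefficient of $Y^{\eta_{ij}}$ is therefore
\[
\binom{\frac{p-1}{2}}{\eta_{ij},r_{ij},q_{ij}}X^{r_{ij}}.
\]
The standard factorization of the multinomial coefficient,
\[
\binom{n}{a,b,c}=\binom{n}{c}\binom{n-c}{b},
\]
applied with $c=q_{ij}$ and $b=r_{ij}$, shows this equals $b_{ij}X^{r_{ij}}$. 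Since $b_{ij}$ is a positive integer (it is a multinomial coefficient with nonnegative entries summing to $\frac{p-1}{2}$), it is nonzero in $\Z$, so $\deg_Y M_{ij}=\eta_{ij}$.

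Finally, specialize $X=\alpha$. The terms of $Y$-degree below $\eta_{ij}$ stay below $\eta_{ij}$, so $\deg_Y M_{ij}(\alpha,Y)\le\eta_{ij}$, and the coefficient of $Y^{\eta_{ij}}$ is $\alpha^{r_{ij}}b_{ij}$. If $\alpha\ne0$, this coefficient is a nonzero element of the integral domain $\cO_{\Q(\alpha)}$ (characteristic $0$, and $b_{ij}\neq 0$), so equality holds in the degree bound. When $\eta_{ij}<0$ the statements hold trivially, since $M_{ij}=0$, $\deg(0)=-\infty$, and the coefficient of $Y^{\eta_{ij}}$ is $0$ under the convention that negative powers have zero coefficient. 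I expect no real obstacle beyond verifying $q_{ij}\ge0$ in the equivalence step.
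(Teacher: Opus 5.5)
Your route is the paper's: parametrize $\cT(pi-j)$ by $\ell\ge 0$, read off the unique maximal $t_0$, and factor the multinomial coefficient. The gap is the one step the paper actually works for, the implication $\eta_{ij}\ge 0\Rightarrow q_{ij}\ge 0$, which is needed so that $(\eta_{ij},r_{ij},q_{ij})\in\Z_{\ge0}^3$.

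Your main argument assumes $p>g$, a hypothesis borrowed from the later theorems. The lemma itself is stated under the standing hypothesis $\gcd(p,2d)=1$ only, and then $pi-j<0$ really occurs (e.g.\ $d=11$, $g=5$, $p=3$, $i=1$, $j=5$). For that case you only assert that $q_{ij}\ge 0$ ``follows from $\eta_{ij}\ge0$ combined with the equation.'' That is not automatic. For a general negative $N$, the analogous quantity $\frac{p-1}{2}-\lfloor N/d\rfloor-[N]_d$ can be nonnegative even though $\cT(N)=\varnothing$; take $N=-1$ and $p\ge 2d-3$. You have to use the specific shape $N=pi-j\ge p-g$.

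The fix, as in the paper: if $pi-j<0$ then $pi<j\le g<d$. Hence $Q_i=0$, $s_i=pi<j$ and $\delta_{ij}=1$, so $q_{ij}=-1$ and $r_{ij}=pi-j+d$. Then
\[
\eta_{ij}=\tfrac{p-1}{2}+1-(pi-j+d)\le \tfrac{p-1}{2}+1-p+g-d=g-d-\tfrac{p-1}{2}<0,
\]
contradicting $\eta_{ij}\ge 0$. With this inserted, your proof is complete. Your $p>g$ argument alone does suffice for every later use, since $p>d$ holds in Theorem~\ref{T:main2} and in Corollary~\ref{C:bound-proof}.

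Separately, your closing remark that everything ``holds trivially'' when $\eta_{ij}<0$ overreaches. In that case $\deg_Y M_{ij}(\alpha,Y)=-\infty\neq\eta_{ij}$ even for $\alpha\neq 0$. So the ``in particular'' clause should be read under $\eta_{ij}\ge 0$, as the paper does.
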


\begin{proof}
By the definition of $q_{ij}$ and $r_{ij}$, we have
$pi-j=dq_{ij}+r_{ij}$ where $0\le r_{ij}<d$.
Suppose that $\cT(pi-j)\ne \varnothing$.
Lemma \ref{L:unique} shows that its unique element with maximal $t_0$-coordinate 
is $(\eta_{ij},r_{ij},q_{ij})$.
In particular, $\eta_{ij}\ge 0$.
Conversely, suppose that $\eta_{ij}\ge 0$. We
claim that $q_{ij}\ge 0$. If $q_{ij}<0$ then $pi-j<0$. Since $1\le j\le g<d$, this forces
$Q_i=0, s_i=pi, \delta_{ij}=1$.
Therefore
$$
\eta_{ij}=\frac{p-1}{2}+j-pi-(d-1)\le \frac{p-1}{2}+g-p-d+1=g-d-\frac{p-1}{2}<0.
$$
contradicting $\eta_{ij}\ge 0$. Thus $q_{ij}\ge 0$.
Consequently, 
$
(\eta_{ij}, r_{ij}, q_{ij})\in\Z^3_{\ge 0}
$
and satisfies the two defining equations of $\cT(pi-j)$. Hence
$\cT(pi-j)\ne \varnothing$. 

If $\eta_{ij}<0$, the equivalence just proved shows that $\cT(pi-j)=\varnothing$, 
and hence 
$M_{ij}(X,Y)=0$.
Now suppose $\eta_{ij}\ge 0$. By Lemma \ref{L:pi-j},
the unique summand of maximal $Y$-degree corresponds to 
$(t_0,t_1,t_d)=(\eta_{ij}, r_{ij}, q_{ij})$.
Its coefficient is  
$$
[Y^{\eta_{ij}}]M_{ij}(X,Y)
=X^{r_{ij}}\binom{\frac{p-1}{2}}{\eta_{ij},r_{ij},q_{ij}}
=X^{r_{ij}}b_{ij}.
$$
This is nonzero in $\Z[X]$ since $b_{ij}\in \Z_{>0}$. 
Therefore, 
$\deg_Y M_{ij}(X,Y)=\eta_{ij}$.
Specializing $X=\alpha$ gives 
$$
[Y^{\eta_{ij}}]M_{ij}(\alpha,Y)=\alpha^{r_{ij}}b_{ij},\qquad
\deg_Y(M_{ij}(\alpha,Y))\le \eta_{ij}.
$$
If $\alpha\ne 0$, the displayed coefficient is nonzero, so equality of degrees holds.
\end{proof}

\subsection{Intergral Hasse--Witt matrix}

For our study of ordinarity, the $g\times g$ coefficient matrix 
$(M_{ij}(X,Y))_{1\le i,j\le g}$, and especially its determinant, will be analyzed. 

\begin{definition}\label{D:polynomial}
Define an {\bf integral Hasse--Witt determinant polynomial} in variables $X$ and $Y$:
$$
H_{p}(X,Y):=\det_{1\le i,j\le g} M_{ij}(X,Y)\in\Z[X,Y].
$$
For every algebraic integer $\alpha$, define its specialization
$$\boxed{H_{p,\alpha}(Y):=H_p(\alpha,Y)=\det_{1\le i,j\le g} M_{ij}(\alpha,Y).}$$
When no confusion is possible, we call these {\bf Hasse--Witt polynomials}.
\end{definition}

Note that $H_p(X,Y)\in\Z[X,Y]$ and $H_{p,\alpha}(Y)$ lies in 
$\cO_{\Q(\alpha)}[Y]$. 
We will see later in Proposition \ref{P:nonzero}
that $H_{p,\alpha}(Y)$ is nonzero when $\alpha\ne 0$ and $p>P^+(d)$.

\subsection{Leading term in Hasse--Witt determinant polynomials}

The Hasse--Witt determinant 
polynomial $H_{p,\alpha}(Y)$ is a polynomial in 
$\cO_{\Q(\alpha)}[Y]$. For the proof of our main theorem, 
we need to examine its reduction mod $\fp$ for primes 
$\fp$ of  $\cO_{\Q(\alpha)}$.
There are two major steps.
We first study the leading term;
then we control the prime factors in the leading coefficient.

Let $S_g$ be the symmetric group on $g$ letters, and consider the subset 
$$
S_g':=\{\sigma\in S_g\mid \sigma(i)\le s_i \mbox{ for $1\le i\le g$}\}.
$$
For every $\sigma\in S_g$ set 
$$E(\sigma)\coloneqq\sum_{i=1}^g \eta_{i,\sigma(i)}.$$

\begin{lemma}\label{L:E}
We have $S_g'\ne \varnothing$, and $E(\sigma)$ is independent of 
$\sigma\in S_g'$. Then 
$
E=\sum_{i=1}^g \left(\frac{p-1}{2}-Q_i-s_i\right)+\frac{g(g+1)}{2}.
$
\end{lemma}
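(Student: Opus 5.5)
Two things need proving: that $S_g'$ is nonempty, and that for $\sigma\in S_g'$ the sum $E(\sigma)$ is the displayed constant. I will do the computation first, since it is immediate from \eqref{E:eta}. If $\sigma\in S_g'$, then $\sigma(i)\le s_i$ for every $i$. Hence $\delta_{i\sigma(i)}=0$, which gives $q_{i\sigma(i)}=Q_i$ and $r_{i\sigma(i)}=s_i-\sigma(i)$. Therefore
$$\eta_{i\sigma(i)}=\frac{p-1}{2}-Q_i-s_i+\sigma(i).$$
Summing over $i$ and using that $\sigma$ is a permutation, so $\sum_i\sigma(i)=\sum_{j=1}^g j=g(g+1)/2$, yields the stated value of $E$. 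This value does not depend on $\sigma$, which settles the independence claim.

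For nonemptiness I will use Hall's marriage theorem, or more simply a greedy argument after sorting. Row $i$ may be matched to any column $j\in\{1,\dots,\min(s_i,g)\}$. These allowed sets are initial segments of $\{1,\dots,g\}$, and for such sets a perfect matching exists if and only if the following holds. Arrange the values $s_i$ in increasing order as $s_{(1)}<\cdots<s_{(g)}$; they are distinct by Lemma \ref{L:S_d}. Then we need $s_{(k)}\ge k$ for every $k$. This holds because the $s_i$ are distinct positive integers, so the $k$-th smallest is at least $k$. Concretely, I set $\sigma$ to send the row carrying $s_{(k)}$ to column $k$. Then $\sigma(i)=k\le s_{(k)}=s_i$, so $\sigma\in S_g'$.

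I do not expect any real obstacle here. The only substantive ingredient is that the $s_i$ are distinct positive integers, which Lemma \ref{L:S_d} supplies because $p$ is coprime to $d$; the remainder is bookkeeping with the definitions in \eqref{E:eta}. The one point to be careful about is that for $\sigma\in S_g'$ the condition $\sigma(i)\le s_i$ makes $\delta_{i\sigma(i)}=0$ for every $i$, so no $d$-shift appears in any $r_{i\sigma(i)}$.
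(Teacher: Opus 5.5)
Your proof is correct and follows essentially the same route as the paper. You sort the distinct positive integers $s_i$ (from Lemma~\ref{L:S_d}) and send the row carrying the $k$-th smallest value to column $k$ to get $S_g'\neq\varnothing$; the appeal to Hall's theorem is unnecessary given this explicit construction, but harmless. You then use $\delta_{i\sigma(i)}=0$ for $\sigma\in S_g'$ to sum $\eta_{i\sigma(i)}=\frac{p-1}{2}-Q_i-s_i+\sigma(i)$ into the stated $\sigma$-independent value.
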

\begin{proof}
By Lemma \ref{L:S_d}, $s_1,\ldots,s_g$ are distinct positive integers;
so these integers can be arranged in increasing order $1\le s_{m_1}<s_{m_2}<\ldots<s_{m_g}$. 
Define $\sigma'(m_k)=k$ for every $k$. Since $k\le s_{m_k}$, we see that 
$\sigma'\in S_g'$. Thus
$S_g'$ is non-empty. 
By \eqref{E:eta},
$E=\sum_{i=1}^g\left(\frac{p-1}{2}-Q_i-s_i\right)+\frac{g(g+1)}2.$
This is clearly independent of $\sigma$.
\end{proof}

Define a determinant:
\begin{equation}\label{E:Delta_p}
\boxed{\Delta_p \coloneqq \det_{1\le i,j\le g}
    \binom{\frac{p-1}{2}-Q_i}{s_i-j}.}
    \end{equation}
As $p$ is odd, $\frac{p-1}{2}$ is an integer.
Since $Q_i\le \frac{p\,i}{d}<\frac{p}{2}=\frac{p-1}{2}+\frac{1}{2}$,
and $Q_i$ is integer, we have 
$Q_i\le \frac{p-1}{2}$. This implies
the binomial $\binom{\frac{p-1}{2}-Q_i}{s_i-j}\in\Z_{\ge 0}$,
hence $\Delta_p$ is an integer too. 

\begin{proposition}\label{P:D_p} %Proposition 3.7
Let
$m=\sum_{i=1}^gs_i-\frac{g(g+1)}{2}$ and $u_p=
    \prod_{i=1}^g\binom{\frac{p-1}{2}}{Q_i}$.
Then $m\ge 0$ and $u_p$ is a positive integer coprime to $p$. 
\begin{enumerate}
\item If $E\ge 0$ then 
$$\deg_Y(H_{p}(X,Y))\le E, \qquad
[Y^E]H_{p}(X,Y) =X^m u_p \Delta_p.$$
\item Let $E\ge 0$. For every commutative ring $R$ and every $a \in R$, the specialization $H_p(a,Y)\in R[Y]$ satisfies 
$$
\deg_YH_p(a,Y)\le E, \qquad 
[Y^E]H_p(a,Y)=a^m u_p \Delta_p.
$$
In particular, if $m=0$, we have $[Y^E]H_p(0,Y)=u_p\Delta_p.$
\item 
If $E<0$ then $H_{p}(X,Y)=0$ and $\Delta_p=0$.
\end{enumerate}
\end{proposition}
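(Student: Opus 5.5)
We expand $H_p(X,Y)=\sum_{\sigma\in S_g}\sgn(\sigma)\prod_i M_{i\sigma(i)}(X,Y)$ and use Lemma \ref{L:entry} termwise. Each nonzero term has $Y$-degree at most $\sum_i\eta_{i\sigma(i)}=E(\sigma)$, with $Y^{E(\sigma)}$-coefficient $\prod_i X^{r_{i\sigma(i)}}b_{i\sigma(i)}$.

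\textbf{Claim.} For every $\sigma$, $E(\sigma)\le E$, with equality exactly when $\sigma\in S_g'$. Indeed, by \eqref{E:eta},
$$\eta_{ij}=\frac{p-1}{2}-Q_i-s_i+j-(d-1)\delta_{ij}.$$
Summing over $j=\sigma(i)$ gives $E(\sigma)=E-(d-1)\#\{i:\sigma(i)>s_i\}$, and $d-1>0$. So only $\sigma\in S_g'$ contribute to $[Y^E]$, and for those $\delta_{i\sigma(i)}=0$.

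\textbf{Leading coefficient.} For $\sigma\in S_g'$ we have $r_{i\sigma(i)}=s_i-\sigma(i)$, so the $X$-exponent is $\sum_i s_i-\sum_i\sigma(i)=m$, independent of $\sigma$. Moreover
$$b_{i\sigma(i)}=\binom{\frac{p-1}{2}}{Q_i}\binom{\frac{p-1}{2}-Q_i}{s_i-\sigma(i)}.$$
The key observation is that for $\sigma\notin S_g'$ the matrix entry $\binom{\frac{p-1}{2}-Q_i}{s_i-\sigma(i)}$ vanishes, since $s_i-\sigma(i)<0$. Hence summing $\sgn(\sigma)\prod_i\binom{\frac{p-1}{2}-Q_i}{s_i-\sigma(i)}$ over $S_g'$ is the same as summing over all of $S_g$, which gives $\Delta_p$. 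Therefore $[Y^E]H_p=X^m u_p\Delta_p$.

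For $m\ge 0$: $\sigma'$ from Lemma \ref{L:E} satisfies $\sigma'(i)\le s_i$, so $m=\sum_i(s_i-\sigma'(i))\ge0$. For $u_p$: since $Q_i\le\frac{p-1}{2}<p$, each binomial $\binom{\frac{p-1}{2}}{Q_i}$ is a positive integer with numerator factors all less than $p$, so it is prime to $p$.

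\textbf{Part (2)} follows by applying the ring map $\Z[X,Y]\to R[Y]$, $X\mapsto a$, which preserves both the degree bound and the coefficient identity. Note $0^0=1$ when $m=0$.

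\textbf{Part (3).} If $E<0$, then every $E(\sigma)\le E<0$, so each product contains some $\eta_{i\sigma(i)}<0$ and hence a factor $M_{i\sigma(i)}=0$; thus $H_p=0$. For $\Delta_p$, take $\sigma$ with a nonzero term of $\Delta_p$; then $\sigma\in S_g'$, so $E(\sigma)=E<0$, giving some $i$ with
$$\frac{p-1}{2}-Q_i-s_i+\sigma(i)<0,\quad\text{i.e.}\quad s_i-\sigma(i)>\frac{p-1}{2}-Q_i\ge0.$$
Then $\binom{\frac{p-1}{2}-Q_i}{s_i-\sigma(i)}=0$ as a binomial of a nonnegative integer with larger lower index. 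So every term of $\Delta_p$ vanishes.

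The main obstacle is the bookkeeping in the claim $E(\sigma)=E-(d-1)\#\{i:\sigma(i)>s_i\}$ together with the $\delta_{ij}$ cases. Everything else is formal.
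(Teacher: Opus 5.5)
Your proposal is correct and follows the paper's proof essentially step for step: the Leibniz expansion, the identity $E(\sigma)=E-(d-1)\#\{i:\sigma(i)>s_i\}$, restriction to $S_g'$, extension of the sum to all of $S_g$ via negative lower indices, specialization, and termwise vanishing when $E<0$. One point is left implicit in your Part (1), which the paper checks explicitly: for $\sigma\in S_g'$ whose Leibniz product vanishes (some $\eta_{i\sigma(i)}<0$), the summand $X^m\prod_i b_{i\sigma(i)}$ is also zero, since then $\binom{\frac{p-1}{2}-Q_i}{s_i-\sigma(i)}=0$. This is the observation you already use in Part (3), and it is what justifies summing over all of $S_g'$ rather than only over the $\sigma$ with nonzero products.
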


\begin{proof}
Since $s_1,\ldots,s_g$ are distinct positive integers, 
$
\sum_{i=1}^{g}s_i\geq \sum_{i=1}^{g}i=\frac{g(g+1)}2,
$
and hence $m\geq0$.
Set
$
n=\frac{p-1}{2}.
$
Since $1\leq i\leq g<d/2$, we have
$
Q_i=\left\lfloor\frac{pi}{d}\right\rfloor<\frac p2.
$
Because $Q_i$ is an integer, it follows that
$
0\leq Q_i\leq \frac{p-1}{2}=n.
$
Thus $\binom{n}{Q_i}$ is a positive integer. Moreover, $n<p$, so none of the factorials occurring in
$
\binom{n}{Q_i}=\frac{n!}{Q_i!(n-Q_i)!}
$
is divisible by $p$. Hence each 
$\binom{n}{Q_i}$, and therefore their product $u_p$, is coprime to $p$.

\noindent
{\bf(1).} 
By Lemma \ref{L:entry}, if $\eta_{ij}<0$, then  $M_{ij}(X,Y)=0$.
If $\eta_{ij}\ge 0$, then the unique element of $\cT(pi-j)$ with maximal
$t_0$-coordinate is 
$(t_0,t_1,t_d)=(\eta_{ij},r_{ij},q_{ij})$.
For a permutation $\sigma\in S_g$, define
$
e(\sigma):=\#\{i:\sigma(i)>s_i\}.
$
Using the definition of $\eta_{ij}$ in \eqref{E:eta} and the equality
$
\sum_{i=1}^{g}\sigma(i)=\frac{g(g+1)}2,
$
we obtain
\begin{align}\label{E:eta_sum}
\sum_{i=1}^{g}\eta_{i,\sigma(i)}
&=
\sum_{i=1}^{g}
\left(
n-q_{i,\sigma(i)}-r_{i,\sigma(i)}\right)=
E-(d-1)e(\sigma).
\end{align}
Suppose first that $E\ge 0$. Consider the Leibniz expansion
$$
H_p(X,Y)=\sum_{\sigma\in S_g}\sgn(\sigma)\prod_{i=1}^g M_{i,\sigma(i)}(X,Y).
$$
Whenever the product corresponding to $\sigma$ is nonzero, all its
factors are nonzero, and hence by \eqref{E:eta_sum},
$$
\deg_Y\prod_{i=1}^g M_{i,\sigma(i)}(X,Y)=
\sum_{i=1}^{g}\eta_{i,\sigma(i)}
=E-(d-1)e(\sigma)\leq E.
$$
Thus $\deg_YH_p(X,Y)\le E$.
Moreover, if $e(\sigma)>0$, then every nonzero product corresponding to $\sigma$ has degree $<E$. Therefore, only permutations in 
$$
S'_g=\{\sigma\in S_g: \sigma(i)\leq s_i\text{ for every $i$}\}
$$
can contribute to $[Y^E]H_p(X,Y)$.
Let $\sigma\in S_g'$. 
If $\prod_{i=1}^g M_{i,\sigma(i)}(X,Y)\ne 0$, 
then 
\begin{align}
[Y^E]\prod_{i=1}^{g} M_{i,\sigma(i)}(X,Y)
&=X^{\sum_{i=1}^g (s_i-\sigma(i))} \prod_{i=1}^gb_{i,\sigma(i)}
=X^m \prod_{i=1}^g b_{i,\sigma(i)}.
\label{E:11}
\end{align}
This formula remains valid if one of the factors 
$M_{i,\sigma(i)}(X,Y)
$
is zero. Indeed, in that case $\eta_{i,\sigma(i)}<0$ for some $i$.
Since $\sigma(i)\le s_i$,
$
\eta_{i,\sigma(i)}=n-Q_i-(s_i-\sigma(i))<0,
$
so
$
s_i-\sigma(i)>n-Q_i.
$
Therefore
$
\binom{n-Q_i}{s_i-\sigma(i)}=0,
$
and hence $b_{i,\sigma(i)}=0$, 
so both sides of \eqref{E:11} vanish.
Therefore
\begin{align}\label{E:12}
[Y^E]H_p(X,Y)
&=X^m\sum_{\sigma\in S'_g}\sgn(\sigma)\prod_{i=1}^{g}b_{i,\sigma(i)}
\\
&=X^m\left(
\prod_{i=1}^{g}\binom{n}{Q_i}
\right)
\sum_{\sigma\in S'_g}
\sgn(\sigma)
\prod_{i=1}^{g}
\binom{n-Q_i}{s_i-\sigma(i)} \nonumber\\
&=X^m u_p \sum_{\sigma\in S'_g}\sgn(\sigma)\prod_{i=1}^{g}
\binom{n-Q_i}{s_i-\sigma(i)}. \nonumber
\end{align}

If $\sigma\in S_g\setminus S'_g$, then $\sigma(i)>s_i$ for some $i$, so
$
s_i-\sigma(i)<0.
$
By our convention for binomial coefficients,
$
\binom{n-Q_i}{s_i-\sigma(i)}=0.
$
We may therefore extend the sum in \eqref{E:12} from $S'_g$ to all of $S_g$. Using the Leibniz expansion of the determinant $\Delta_p$, we obtain
$$
\begin{aligned}
\sum_{\sigma\in S'_g}
\operatorname{sgn}(\sigma)
\prod_{i=1}^{g}
\binom{n-Q_i}{s_i-\sigma(i)}
&=
\sum_{\sigma\in S_g}
\operatorname{sgn}(\sigma)
\prod_{i=1}^{g}
\binom{n-Q_i}{s_i-\sigma(i)}=
\Delta_p.
\end{aligned}
$$
Combining this with \eqref{E:12}, we have 
$$
[Y^E]H_p(X,Y)=X^m u_p\Delta_p.
$$
{\bf (2).} Now let $R$ be a commutative ring and $a\in R$.
Applying the evaluation homomorphism  
$
\Z[X,Y]\longrightarrow R[Y]$ defined by
$X\longmapsto a,$
gives the specialization $H_p(a,Y)$.
Specialization cannot increase the $Y$-degree, so
$
\deg_Y H_p(a,Y)\leq E,
$
and the coefficient identity specializes to
$$
[Y^E]H_p(a,Y)=a^m u_p\Delta_p.
$$
In particular, when $m=0$, the polynomial $X^m=X^0=1$, 
and hence 
$$[Y^E]H_p(0,Y)=u_p\Delta_p.$$

\noindent 
{\bf (3).} Suppose now that $E<0$. For every $\sigma\in S_g$, 
$$
\sum_{i=1}^{g}\eta_{i,\sigma(i)}
=E-(d-1)e(\sigma)<0.
$$
If $\prod_{i=1}^g M_{i,\sigma(i)}(X,Y)\ne 0$, then 
$\eta_{i,\sigma(i)}\geq0$ for every $i$. 
Their sum would then be nonnegative, contradicting the preceding inequality. Thus every term in the Leibniz expansion of $H_p(X,Y)$ vanishes, and hence
$
H_p(X,Y)=0.
$

It remains to prove $\Delta_p=0$.
Consider a permutation $\sigma\in S_g$.
If $\sigma\not\in S'_g$, then 
$\sigma(i)>s_i$ for some $i$, so the corresponding Leibniz term 
in $\Delta_p$ is zero.
Now suppose $\sigma\in S'_g$.
If the corresponding Leibniz term were nonzero, then for every $i$, 
$\binom{n-Q_i}{s_i-\sigma(i)}\ne 0$.
Since $s_i-\sigma(i)\ge 0$, this implies 
$s_i-\sigma(i)\le n-Q_i,$
and therefore
$\eta_{i,\sigma(i)}=n-Q_i-s_i+\sigma(i)\ge 0.$
Since $\sigma\in S'_g$, Lemma \ref{L:E} gives 
$
E=\sum_{i=1}^{g}\eta_{i,\sigma(i)}\geq0,
$
contrary to the assumption $E<0$. Thus every term in the Leibniz
expansion of $\Delta_p$ is zero, and therefore
$
\Delta_p=0.
$
This proves part (3).
\end{proof}

\subsection{Hasse--Witt determinant polynomial mod $p$}
  
For any prime $\fp\mid p$ in the ring of integers $\cO_{\Q(\alpha)}$, 
we write $\kappa(\fp)=\cO_{\Q(\alpha)}/\fp$.

\begin{proposition}
\label{P:nonzero}
Let $\alpha$ be a nonzero algebraic integer, and let $F=\Q(\alpha)$.
Let $p>P^+(d)$.
\begin{enumerate}
\item Then $H_{p,\alpha}(Y)$ is nonzero in $\cO_{\Q(\alpha)}[Y]$.
\item For any prime $\fp\mid p$ in $\cO_F$, if 
$\alpha\not\in\fp$ then $H_{p,\alpha}(Y) \bmod \fp$
is nonzero in $\kappa(\fp)[Y]$.
\end{enumerate}
\end{proposition}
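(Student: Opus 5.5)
The plan is to exhibit a single coefficient of $H_{p,\alpha}(Y)$ that is nonzero modulo $\fp$. Proposition \ref{P:D_p}(2) identifies the candidate: the coefficient of $Y^E$, which equals $\alpha^m u_p\Delta_p$. Part (1) of the statement then follows from part (2), since a polynomial that is nonzero mod $\fp$ is nonzero. Alternatively, the same argument can be run in characteristic $0$, using $\alpha\neq 0$, $u_p>0$ and $\Delta_p\neq 0$. In either route $\fp$ need not avoid $\alpha$, because every prime $p$ has some prime above it. To be safe, I will prove (1) directly in characteristic $0$.

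The key step is to relate $\Delta_p$ to the rational determinant $\Delta(s_1,\ldots,s_g)$ of Proposition \ref{P:delta}, where the $s_i=[pi]_d$ are distinct elements of $\cA$ by Lemma \ref{L:S_d}. Since $pi=dQ_i+s_i$, we have
\[
\frac{p-1}{2}-Q_i=\frac{p-1}{2}-\frac{pi-s_i}{d}\equiv -\frac12+\frac{s_i}{d}=\beta_i \pmod p
\]
in $\Z_{(p)}$, using $p\nmid 2d$. Each entry $\binom{\frac{p-1}{2}-Q_i}{s_i-j}$ is a polynomial in the top argument with denominator $(s_i-j)!$. That denominator is a $p$-adic unit because $s_i-j<d<p$. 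Hence each entry is congruent mod $p$ to $\binom{\beta_i}{s_i-j}$ in $\Z_{(p)}$, and taking determinants gives $\Delta_p\equiv\Delta\pmod p$. Proposition \ref{P:determinant} says $\Delta$ is a $p$-adic unit when $p>P^+(d)$, so $p\nmid\Delta_p$. In particular $\Delta_p\neq0$.

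It remains to check that $E\ge 0$, so that Proposition \ref{P:D_p}(1)--(2) applies rather than (3). This follows from Proposition \ref{P:D_p}(3) by contraposition: if $E<0$ then $\Delta_p=0$, contradicting the previous step. With $E\ge0$, the coefficient $[Y^E]H_{p,\alpha}(Y)$ equals $\alpha^m u_p\Delta_p$.

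\begin{itemize}
\item For part (1): $\alpha\neq0$, $u_p$ is a positive integer, and $\Delta_p\neq 0$, so this coefficient is nonzero in $\cO_F$.
\item For part (2): reduce modulo $\fp$. The factor $u_p$ is coprime to $p$ by Proposition \ref{P:D_p}, $\Delta_p$ is coprime to $p$ by the previous step, and $\alpha\notin\fp$ with $\fp$ prime gives $\alpha^m\notin\fp$. The product is therefore nonzero in $\kappa(\fp)$, so $H_{p,\alpha}(Y)\bmod\fp$ is nonzero.
\end{itemize}

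The main obstacle is the congruence $\Delta_p\equiv\Delta\pmod p$. One must check that the generalized binomial $\binom{z}{k}$ respects congruences of $p$-integral arguments when $k<p$. It does, since $\binom{z}{k}=z(z-1)\cdots(z-k+1)/k!$ with $k!$ a unit in $\Z_{(p)}$. One must also confirm that the entries with $s_i-j<0$ vanish on both sides, which holds because both use the same convention $\binom{z}{k}=0$ for $k<0$.
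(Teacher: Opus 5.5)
Your proposal is correct and follows the paper's proof essentially step for step. You use the congruence $\Delta_p\equiv\Delta(s_1,\ldots,s_g)\pmod p$ together with Proposition~\ref{P:determinant}, then get $E\ge 0$ from Proposition~\ref{P:D_p}(3), and finally read off the nonzero leading coefficient $\alpha^m u_p\Delta_p$ in characteristic $0$ and modulo $\fp$.

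One small caution: your opening aside that (1) follows from (2) does not hold in general. Part (2) needs some $\fp\mid p$ with $\alpha\notin\fp$, and no such prime exists for, e.g., $\alpha=p$. You rightly discard that route and prove (1) directly, as the paper does.
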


\begin{proof}
Since \(p>P^+(d)>d\), both
\(2\) and \(d\) are invertible modulo \(p\); thus for each $i$ we have 
the congruence in $\Z_{(p)}$:
\[
\frac{p-1}{2}-Q_i
=\frac{p-1}{2}-\frac{p\,i-s_i}{d}
\equiv \frac{s_i}{d}-\frac12
=\beta_i
\pmod p.
\]
If \(0\le s_i-j<d<p\), then \((s_i-j)!\) is invertible modulo \(p\),
and therefore in $\Z_{(p)}$ we have
\[
\binom{\frac{p-1}{2}-Q_i}{s_i-j}
\equiv
\binom{\beta_i}{s_i-j}
\pmod p.
\]
If \(s_i-j<0\), both corresponding binomial coefficients are zero, so the above congruence again holds.
Taking determinants gives
\[
\Delta_p\equiv
\Delta(s_1,\ldots,s_g)\not\equiv0\pmod p
\]
by Proposition~\ref{P:determinant} (since $p>P^+(d)$). 
In particular, $\Delta_p\ne 0$ in $\Z$.
Proposition~\ref{P:D_p}(3) therefore implies that $E\ge 0$, and 
Proposition~\ref{P:D_p}(2) gives 
$[Y^E]H_{p,\alpha}=\alpha^m u_p\Delta_p.$
Since $\alpha\ne 0$, $u_p\in\Z_{\ge 1}$, and $\Delta_p\ne 0$, 
this coefficient is nonzero in $\cO_F$. This proves part (1).

Now let $\fp\mid p$ and $\alpha\not\in \fp$. 
Then $\alpha\not\equiv 0\bmod \fp$.
By Proposition \ref{P:D_p}, $u_p\not\equiv 0\bmod p$.
We also have $\Delta_p\not\equiv 0\bmod \fp$ above.
Thus $[Y^E]H_{p,\alpha}= \alpha^m u_p \Delta_p\not\equiv 0\bmod \fp$, hence 
$H_{p,\alpha}(Y)\not\equiv 0\bmod \fp$, proving part (2).
\end{proof}

\begin{corollary}\label{C:nonzero}
Let $p>P^+(d)$.  
Then 
$
H_p(X,Y) \bmod p
$
is a nonzero polynomial in $\F_p[X,Y]$. 
In particular, $H_p(X,Y)$ is a nonzero polynomial over $\Z$.
\end{corollary}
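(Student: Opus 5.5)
The plan is to read off the nonvanishing of $H_p(X,Y)\bmod p$ from the coefficient of $Y^E$, computed in Proposition~\ref{P:D_p}(1), using the same congruence argument as in Proposition~\ref{P:nonzero}. Since $p>P^+(d)>d$, the proof of Proposition~\ref{P:nonzero} shows, independently of any choice of $\alpha$, that
\[
\Delta_p\equiv \Delta(s_1,\ldots,s_g)\pmod p .
\]
Here $s_1,\ldots,s_g$ are distinct elements of $\cA$ by Lemma~\ref{L:S_d}. By Proposition~\ref{P:determinant}, $\Delta(s_1,\ldots,s_g)$ is a $p$-adic unit, so $\Delta_p\not\equiv 0\pmod p$. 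In particular $\Delta_p\neq 0$, and Proposition~\ref{P:D_p}(3) then forces $E\ge 0$.

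With $E\ge 0$, Proposition~\ref{P:D_p}(1) gives
\[
[Y^E]H_p(X,Y)=X^m u_p\Delta_p \in \Z[X],
\]
where $u_p$ is a positive integer coprime to $p$. To pass to $\F_p$, we apply Proposition~\ref{P:D_p}(2) with $R=\F_p[X]$ and $a=X$. Equivalently, reduction modulo $p$ is a ring homomorphism $\Z[X,Y]\to\F_p[X,Y]$ that commutes with taking the coefficient of $Y^E$. Hence the coefficient of $Y^E$ in $H_p(X,Y)\bmod p$ is $X^m\cdot(u_p\Delta_p\bmod p)$. The scalar $u_p\Delta_p$ is nonzero in $\F_p$, since both factors are units mod $p$. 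Therefore this coefficient is a nonzero monomial in $\F_p[X]$, and $H_p(X,Y)\bmod p\neq 0$.

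The final assertion is immediate: a polynomial whose reduction mod $p$ is nonzero is itself nonzero in $\Z[X,Y]$.

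There is no real obstacle here. The only care needed is that the congruence $\Delta_p\equiv\Delta\pmod p$ in the proof of Proposition~\ref{P:nonzero} depends only on $p$ and $d$, not on $\alpha$, so it may be quoted verbatim. One could alternatively deduce the result from Proposition~\ref{P:nonzero}(2) with $\alpha=1$, where $F=\Q$ and $\fp=p$: since $H_p(1,Y)\bmod p\neq 0$, the polynomial $H_p(X,Y)\bmod p$ cannot be zero.
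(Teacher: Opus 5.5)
Your argument is correct and is essentially the paper's own. The paper proves the corollary in one line: it specializes to $X=1$ and invokes Proposition~\ref{P:nonzero}(2) with $\alpha=1$, which is exactly your closing alternative. Your main route simply inlines that proposition's leading-coefficient computation in $\F_p[X,Y]$, and in doing so also shows that the $Y^E$-coefficient is the nonzero monomial $\overline{u_p\Delta_p}\,X^m$.
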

\begin{proof}
If $H_p(X,Y)\bmod p$ is a zero polynomial then 
$H_{p,1}(Y)=H_p(1,Y)=0\bmod p$, 
contradicting Proposition \ref{P:nonzero}.
\end{proof}

\section{Hyperelliptic families 
with generically ordinary fibers}

Let $\alpha$ be an algebraic integer and let  $F=\Q(\alpha)$.
The discriminant of $f(x)=x^d+\alpha x+t$ is
\begin{equation}\label{E:Disc}
D_\alpha(t)=\disc_x(f(x))=
(-1)^{d(d-1)/2}(d^dt^{d-1}+(-1)^{d-1}(d-1)^{d-1}\alpha^d).
\end{equation}
Let $p\nmid 2d$ below.
For each prime $\fp\mid p$ in $\cO_F$ with  $\kappa(\fp)=\cO_F/\fp$, let
$$
\cS_\fp^\circ \coloneqq \cS^\circ \times_{\Spec{\cO_F}} 
\Spec\; \kappa(\fp). 
$$
Let $\bar\alpha$ denote $\alpha\bmod \fp$ and 
$\bar{D}_{\bar\alpha}(t)$ denote 
$D_\alpha(t)\bmod \fp$.
Since $\bar{D}_{\bar\alpha}(t)$ is nonzero,
every fiber $\cS^\circ_\fp$ is a nonempty geometrically integral open 
subscheme of $\A^1_{\kappa(\fp)}$, defined by the principal open 
$D(\bar{D}_{\bar{\alpha}}(t))$.

\begin{theorem}[Theorem \ref{T:main}]
\label{T:main2}
Let $d\in\{2g+1,2g+2\}$ with $g\ge 2$.
Let $\alpha$ be a nonzero algebraic integer. 
For every prime $p>P^+(d)$ and $p\nmid N_{F/\Q}(\alpha)$, 
the family of genus-$g$
curves 
\begin{eqnarray*}
\cC_\alpha &\longrightarrow & \cS^\circ
\end{eqnarray*}
is generically ordinary at each prime $\fp\mid p$ in $\cO_F$. 
\end{theorem}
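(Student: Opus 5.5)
The plan is to connect the integral polynomial $H_{p,\alpha}$ to the actual Hasse--Witt (Cartier--Manin) matrix of the fibers, and then use Proposition~\ref{P:nonzero}. For a hyperelliptic curve $y^2=f(x)$ over a field $k$ of characteristic $p$ with $f$ squarefree of degree $d\in\{2g+1,2g+2\}$, the classical Manin/Yui description gives the Cartier--Manin matrix as $\bigl(c_{pi-j}\bigr)_{1\le i,j\le g}$, where $c_N=[x^N]f(x)^{(p-1)/2}$. Moreover, the curve is ordinary if and only if the $p$-rank is $g$, i.e.\ the matrix $A A^{(p)}\cdots A^{(p^{g-1})}$ is invertible, which holds if and only if $\det A\neq 0$. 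Take $f(x)=x^d+\bar\alpha x+t$ over the function field $K=\kappa(\fp)(t)$ of $\cS^\circ_\fp$. By the second description of $M_{ij}$ in \eqref{E:Mij}, the entries are $M_{ij}(\bar\alpha,t)$, so the determinant of the Cartier--Manin matrix of the generic fiber is $H_{p,\alpha}(t)\bmod\fp$.

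The steps, in order, are as follows. First, I check that $\cC_\alpha$ restricted to $\cS^\circ_\fp$ really is a smooth family of genus-$g$ curves. Here $p>P^+(d)>d$ gives $p\nmid 2d$, and $p\nmid N_{F/\Q}(\alpha)$ implies $\alpha\notin\fp$ for every $\fp\mid p$. Then $\bar D_{\bar\alpha}(t)$ is a nonzero polynomial by \eqref{E:Disc}, since its leading term $d^d t^{d-1}$ has unit coefficient. So $\cS^\circ_\fp$ is a nonempty open subset of $\A^1$, and over it $x^d+\bar\alpha x+t$ is separable. This yields smooth hyperelliptic fibers; for $d$ even the two points at infinity are handled by the standard model. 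Second, I invoke the Cartier--Manin formula above for the generic fiber over $K$ (or over $\bar K$), identifying the Hasse--Witt determinant with $\bar H_{p,\alpha}(t)$. Third, I apply Proposition~\ref{P:nonzero}(2): since $p>P^+(d)$ and $\alpha\notin\fp$, the polynomial $H_{p,\alpha}(Y)\bmod\fp$ is nonzero in $\kappa(\fp)[Y]$. Hence its image in $K$ is nonzero, so the generic fiber is ordinary.

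Finally, I conclude that ordinarity of the generic fiber is equivalent to ordinarity of a dense open set of closed fibers. Indeed, the locus where $\bar H_{p,\alpha}(t)\ne 0$ is a nonempty open subset of the irreducible curve $\cS^\circ_\fp$, and by the same Cartier--Manin formula, specialized at closed points $t_0$, every fiber over that open set is ordinary. This is precisely generic ordinarity at $\fp$.

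I expect the main obstacle to be justifying the Cartier--Manin description carefully. This means pinning down the index convention ($pi-j$ versus $pi-j$ transposed, or the Frobenius twist), so that it matches the matrix $(M_{ij})$ up to transpose or twist, neither of which affects whether the determinant vanishes. It also means verifying that it applies uniformly for $d$ even, where the degree-$2g+2$ model has two points at infinity. I would cite Yui's theorem (Manin's for odd degree) for $\deg f\in\{2g+1,2g+2\}$. I would also note that invertibility of $A$ is equivalent to invertibility of the iterated product, because the Frobenius twist preserves nonvanishing of the determinant.
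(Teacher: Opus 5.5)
Your proposal is correct and follows the same route as the paper. You identify the Cartier--Manin matrix $(c_{pi-j})$ of the generic fiber with $(M_{ij}(\bar\alpha,t))$, so its determinant is $H_{p,\alpha}(t)\bmod\fp$, and then invoke Proposition~\ref{P:nonzero}(2) using $\alpha\notin\fp$. Your extra checks make explicit what the paper's proof uses only implicitly: nonvanishing of $\bar{D}_{\bar\alpha}$ via its unit leading coefficient $d^d$, the equivalence $p$-rank $g \Leftrightarrow \det A\neq 0$ via Frobenius twists, and validity of the formula for $d=2g+2$.
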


\begin{proof}
Each reduction $\cC_\alpha\bmod \fp$ has affine equation $y^2=\bar{f}(x)$
where 
$\bar{f}(x)=x^d+\bar{\alpha}x+t
$ lies in $\kappa(\fp)[t][x]$.
The generic fiber over $\kappa(\fp)(t)$ is ordinary if 
and only if 
$$\det_{1\le i,j\le g} (C_{ij})\ne 0,\qquad  
C_{ij}=[x^{p\,i-j}]\bar{f}(x)^{\frac{p-1}{2}}.
$$
(We remark that the coefficient matrix $(C_{ij})_{1\le i,j\le g}$ 
is the transpose of Hasse--Witt matrix.
See \cite{AH19} for clarification in modern notations.)
From  \eqref{E:Mij}, we immediately conclude
$$
C_{ij}= (M_{ij}(\alpha,t) \bmod \fp).
$$
Taking determinant, we have
$$\det_{1\le i,j\le g} (C_{ij})= \det_{1\le i,j\le g} (M_{ij}(\alpha,t)\bmod \fp)
=(H_{p,\alpha}(t)\bmod \fp)$$
where $H_{p,\alpha}(t)$ is the Hasse--Witt polynomial 
introduced above in Definition \ref{D:polynomial}.
Write $\bar{H}_{p,\bar\alpha}(Y)=H_{p,\alpha}(Y)\bmod \fp$.
Then the generic fiber over $\kappa(\fp)(t)$ 
is ordinary if and only if 
$\bar{H}_{p,\bar\alpha}(t)$ is nonzero in $\kappa(\fp)[t]$.
Equivalently, the ordinary locus in $\cS_\fp^\circ$ is
$D(\bar{H}_{p,\bar\alpha})\cap \cS^\circ_{\fp}$.

Note that $p\nmid N_{F/\Q}(\alpha)$ implies $\alpha\not\in\fp$ for every 
prime $\fp\mid p$. Applying 
Proposition~\ref{P:nonzero}~(2), 
we see that $\bar{H}_{p,\bar\alpha}(t)$ 
is a nonzero polynomial in $\kappa(\fp)[t]$ for every $\fp\mid p$. 
Hence this ordinary locus on $\cS_\fp^\circ$ is  $D(\bar{H}_{p,\bar\alpha})\cap \cS_\fp^\circ$.
As noted before the theorem, 
the ordinary locus on $\A^1_{\kappa(\fp)}$ 
is $D(\bar{D}_{\bar\alpha} \bar{H}_{p,\bar\alpha})$.
This proves that $\cC_\alpha$ is generically ordinary.
\end{proof}

\begin{corollary}\label{C:integer}
Let $d\in\{2g+1,2g+2\}$ with $g\ge 2$.
Let $\alpha$ be a nonzero integer.
For every prime $p>P^+(d)$ and $p\nmid \alpha$,
the family of genus-$g$ curves 
\begin{eqnarray*}
\cC_\alpha &\longrightarrow & \cS^\circ
\end{eqnarray*}
is generically ordinary at $p$. In particular, the Katz family 
$\cK$ is generically ordinary for all $p>P^+(d)$.
\end{corollary}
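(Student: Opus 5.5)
This is a direct specialization of Theorem~\ref{T:main2} to $F=\Q(\alpha)=\Q$. Here $\cO_F=\Z$, so the only prime above $p$ is $\fp=(p)$, with residue field $\kappa(\fp)=\F_p$. The norm is $N_{\Q/\Q}(\alpha)=\alpha$, so the hypothesis $p\nmid N_{F/\Q}(\alpha)$ in Theorem~\ref{T:main2} becomes exactly $p\nmid\alpha$. The remaining hypotheses, $g\ge2$, $d\in\{2g+1,2g+2\}$, $\alpha\neq0$ and $p>P^+(d)$, are assumed verbatim. Theorem~\ref{T:main2} therefore says that $\cC_\alpha\to\cS^\circ$ is generically ordinary at $\fp=(p)$, which is what "generically ordinary at $p$" means here.

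For the Katz family, we note that $\cK=\cC_{-d}$, with $\alpha=-d$ a nonzero integer. The only extra condition to check is $p\nmid d$ for every $p>P^+(d)$. This holds because $P^+(d)>d$ whenever $g\ge2$: for $d$ odd, $d^2-4d+2-d=d^2-5d+2>0$ since $d\ge5$; for $d$ even, $(d^2-3d+2)/2-d=(d^2-5d+2)/2>0$ since $d\ge6$. Hence every prime $p>P^+(d)$ exceeds $d$ and cannot divide $\alpha=-d$. This is the same observation the paper makes before Section~1's discussion of Miller.

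No step here is a genuine obstacle. The only point needing care is identifying the primes of $\cO_\Q$ above $p$ and the norm condition, and checking the inequality $P^+(d)>d$.
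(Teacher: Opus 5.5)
Your proposal is correct and follows essentially the same route as the paper: specialize Theorem~\ref{T:main2} to $F=\Q$, where $p\nmid N_{F/\Q}(\alpha)$ becomes $p\nmid\alpha$, and for $\cK=\cC_{-d}$ use $p>P^+(d)>d$ to get $p\nmid d$. Your explicit check of $P^+(d)>d$ for $d\ge5$ odd and $d\ge6$ even is a small detail the paper only asserts.
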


\begin{proof}
For a nonzero integer $\alpha$, apply Theorem~\ref{T:main2} with
$F=\Q$. The hypothesis is precisely $p\nmid\alpha$. This proves the first
assertion.
For the Katz family, take $\alpha=-d$. Since $p>P^+(d)>d$, one has 
$p\nmid d$, and the result follows.
\end{proof}

Consider the family $\cC$ of curves 
given by affine chart $y^2=f(x)$ where
$$f(x)=x^d+sx+t$$ over the $(s,t)$--plane. 
Write $D(s,t)=\disc_x(f(x))$.
According to \cite{Kat14}, the topological monodromy
of the two-parameter family $\cC$ has finite index in $\Sp_{2g}(\Z)$.

\begin{corollary}\label{C:plane}
Let $d\in\{2g+1,2g+2\}$ with $g\ge 2$.
Then the family $\cC$ of genus-$g$ curves 
\begin{eqnarray*}
\cC&\longrightarrow & \Spec\Z[1/2,s,t,D(s,t)^{-1}]
\end{eqnarray*}
is generically ordinary at each prime $p>P^+(d)$. 
\end{corollary}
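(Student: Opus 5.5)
The plan is to run the argument of Theorem~\ref{T:main2} with the two-variable polynomial $H_p(X,Y)$ of Definition~\ref{D:polynomial} in place of $H_{p,\alpha}(Y)$, using Corollary~\ref{C:nonzero} to supply the non-vanishing.

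Fix a prime $p>P^+(d)$, so that $p\nmid 2d(d-1)$. Reduce the family modulo $p$. The fiber of the base over $p$ is the principal open $D(\bar D(s,t))$ of $\A^2_{\F_p}$, where
$$\bar D(s,t)=(-1)^{d(d-1)/2}\bigl(d^d t^{d-1}+(-1)^{d-1}(d-1)^{d-1}s^d\bigr)\bmod p,$$
obtained from \eqref{E:Disc} with $\alpha$ replaced by $s$. This is a nonzero polynomial, because $d^d$ is a unit mod $p$. Hence the fiber is a nonempty, geometrically integral open subscheme of the plane, with generic point $\Spec\F_p(s,t)$.

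Over $\F_p(s,t)$, the generic curve $y^2=x^d+sx+t$ has Cartier--Manin matrix $C_{ij}=[x^{pi-j}](x^d+sx+t)^{(p-1)/2}$, exactly as in the proof of Theorem~\ref{T:main2}, and ordinarity is equivalent to $\det C\neq 0$. By the second description of $M_{ij}$ in \eqref{E:Mij}, we have $C_{ij}=M_{ij}(s,t)\bmod p$. Therefore
$$\det C=H_p(s,t)\bmod p.$$

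Corollary~\ref{C:nonzero} says that $H_p(X,Y)\bmod p$ is a nonzero element of $\F_p[X,Y]$. Consequently $\det C\neq 0$ in $\F_p(s,t)$, so the generic fiber is ordinary. The ordinary locus in the fiber is then the nonempty dense open set $D(\bar D\,\bar H_p)\subset\A^2_{\F_p}$, which is exactly generic ordinarity at $p$.

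No step is a real obstacle. The only points needing care are checking that $\bar D$ is nonzero, so the fiber is nonempty and integral, and confirming that the Hasse--Witt criterion used in Theorem~\ref{T:main2} applies verbatim over the two-variable function field $\F_p(s,t)$. Both are immediate.
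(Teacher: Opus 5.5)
Your proposal is correct and follows essentially the same route as the paper. Both identify the Cartier--Manin determinant of the generic fiber over $\F_p(s,t)$ with $\bar H_p(s,t)$, invoke Corollary~\ref{C:nonzero} for its nonvanishing, and conclude that the ordinary locus $D(\bar D\,\bar H_p)$ is a nonempty open subset of the base; your check that $\bar D\neq 0$, because its leading coefficient $d^d$ is a unit mod $p$, supplies a detail the paper asserts without justification.
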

\begin{proof}
For $p>P^+(d)$, the reduction $\bar{D}(s,t)$ at $p$ 
is nonzero.
The Hasse--Witt polynomial
of the generic fiber over $\F_p(s,t)$ is 
$\bar{H}_p(s,t)$. 
Corollary \ref{C:nonzero} shows this is nonzero in $\F_p[s,t]$.
Since $\bar{D}(s,t)\bar{H}_p(s,t)$ 
is a nonzero polynomial in $\F_p[s,t]$, the corresponding 
nonvanishing locus is a nonempty open subset of the base.
Therefore the generic fiber is ordinary.
\end{proof}

\begin{remark}
Each curve $\cC_\alpha(t)$ is isomorphic to 
$\cK(t)$ after some finite extension of base field and scaling 
of parameters. More precisely, 
for any nonzero $\alpha, \beta$, choose $u$ such that $u^{2(d-1)}=\alpha/\beta$.
Then $x=u^2X, y=u^dY, t=u^{2d}T$ transforms 
$\cC_\alpha(t): y^2=x^d+\alpha x+t$ to $\cC_\beta(T): Y^2=X^d+\beta X+T$.
Thus the families become isomorphic after a finite extension of the ground field and a linear rescaling of the parameter.
\end{remark}

For the residue class $p\equiv 1\pmod d$, the bound $P^+(d)$ can be improved.

\begin{corollary}[Corollary \ref{C:bound}]\label{C:bound-proof}
Let $\alpha$ be any algebraic integer, including the case $\alpha=0$, and let $F=\Q(\alpha)$.
Then for every prime $p\equiv 1\bmod d$ 
$$\cC_\alpha \longrightarrow \cS^\circ$$
is generically ordinary at each prime $\fp\mid p$ of $F$.
\end{corollary}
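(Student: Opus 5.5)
When $p\equiv 1\pmod d$ the residues $s_i=[p\,i]_d$ are simply $s_i=i$. The plan is to show that in this case the leading coefficient of the Hasse--Witt polynomial involves no power of $\alpha$ and no potentially bad determinant, so no hypothesis on $\alpha$ or on the size of $p$ is needed. Then we rerun the argument of Theorem~\ref{T:main2} verbatim.

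\emph{Standing hypotheses.} Since $p\equiv 1\pmod d$ and $d\ge 5$, we have $p>d$, $p$ is odd, and $p\nmid 2d$, so the standing hypotheses of Section 3 hold. Moreover $Q_i=i(p-1)/d$, and $\frac{p-1}{2}-Q_i=(p-1)\bigl(\frac12-\frac id\bigr)\ge 0$.

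\emph{Computing $m$ and $\Delta_p$.} With $s_i=i$ we get $m=\sum_i s_i-\frac{g(g+1)}{2}=0$. The determinant $\Delta_p=\det\binom{\frac{p-1}{2}-Q_i}{i-j}$ is then lower triangular: entries with $j>i$ vanish by the convention $\binom{z}{k}=0$ for $k<0$, and the diagonal entries are $\binom{\cdot}{0}=1$. Hence $\Delta_p=1$, and no appeal to Proposition~\ref{P:determinant} (and so no bound $P^+(d)$) is required.

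\emph{Leading coefficient of $H_{p,\alpha}$.} Since $\Delta_p\neq 0$, Proposition~\ref{P:D_p}(3) forces $E\ge 0$. One can also see this directly: $E=\sum_i(\frac{p-1}{2}-Q_i)\ge 0$. Proposition~\ref{P:D_p}(2) with $R=\kappa(\fp)$ and $a=\bar\alpha$ then gives
\[
[Y^E]\,\bar H_{p,\bar\alpha}(Y)=\bar\alpha^{0}\,u_p\,\Delta_p=u_p \bmod \fp .
\]
This holds even for $\bar\alpha=0$, with the usual convention $a^0=1$; this is exactly the ``in particular'' clause for $m=0$. Since $u_p$ is coprime to $p$, this coefficient is nonzero in $\kappa(\fp)$. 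Hence $\bar H_{p,\bar\alpha}(t)\ne 0$ in $\kappa(\fp)[t]$ for every $\fp\mid p$ and every algebraic integer $\alpha$, including $\alpha=0$.

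\emph{Conclusion.} We finish as in the proof of Theorem~\ref{T:main2}. The determinant of $C_{ij}=[x^{pi-j}]\bar f^{(p-1)/2}$ equals $\bar H_{p,\bar\alpha}(t)$. We must also check that $\cS^\circ_\fp$ is nonempty, which needs the discriminant \eqref{E:Disc} to be nonzero mod $\fp$. Its $t^{d-1}$-coefficient is $\pm d^d$, a unit mod $p$, so $\bar D_{\bar\alpha}(t)\ne 0$ even when $\alpha=0$. Then $D(\bar D_{\bar\alpha}\bar H_{p,\bar\alpha})$ is a nonempty open subset of $\A^1_{\kappa(\fp)}$ on which the fibers are ordinary, so the generic fiber is ordinary.

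The only genuinely delicate point is the $\alpha=0$ case. There we must make sure the specialization in Proposition~\ref{P:D_p}(2) really yields $u_p$, because $m=0$ exactly, and that the base $\cS^\circ_\fp$ stays nonempty. Both follow from the computations above.
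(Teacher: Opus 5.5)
Your proposal is correct and follows the paper's proof essentially step for step. In both arguments, $s_i=i$ gives $m=0$ and a lower unitriangular matrix for $\Delta_p$, so $\Delta_p=1$; Proposition~\ref{P:D_p} then yields the leading coefficient $u_p\not\equiv 0 \pmod{\fp}$ for every $\bar\alpha$, including $\bar\alpha=0$; and the unit $t^{d-1}$-coefficient $\pm d^d$ of the discriminant keeps $\cS^\circ_\fp$ nonempty before you rerun the argument of Theorem~\ref{T:main2}. Your direct check that $E=\sum_i\bigl(\frac{p-1}{2}-Q_i\bigr)\ge 0$ is a harmless extra; the paper obtains $E\ge 0$ from Proposition~\ref{P:D_p}(3) instead.
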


\begin{proof}
Since $p\equiv 1\pmod d$, one has $p>d$, and therefore $p\nmid d$.
The coefficient of $t^{d-1}$ in $D_\alpha(t)$ remains nonzero modulo $\fp$, as seen in 
\eqref{E:Disc}, $\bar{D}_\alpha(t)\ne 0$.
We have $s_i=i$ for $1\le i\le g$.
Hence the matrix defining $\Delta_p$ is lower triangular with diagonal
entries $1$, so $\Delta_p=1$. 
In particular, $\Delta_p\neq 0$, so
Proposition~\ref{P:D_p}(3) implies $E\ge 0$. 
Moreover, 
$m=\sum_{i=1}^g s_i-\frac{g(g+1)}{2}=0$.
By Proposition \ref{P:D_p}(1),
$$
[Y^E]H_{p}(X,Y)=X^mu_p\Delta_p=u_p. 
$$
Let $\fp|p$, and let $\bar\alpha\in \kappa(\fp)$ denote
the reduction of $\alpha$. By Proposition \ref{P:D_p}(2),
$$
[Y^E]H_{p,\bar{\alpha}}(Y)=\bar{u_p}\ne 0$$
since $p\nmid u_p$.
Thus $H_{p,\bar\alpha}(Y)$ is nonzero for every $\bar\alpha$,
including $\bar\alpha=0$.
The argument of Theorem \ref{T:main2} 
now shows that $\cC_\alpha$ is generically ordinary at $\fp$.
\end{proof}

\end{document}